\newtheorem{theorem}{Theorem}[section]
\newtheorem{conjecture}[theorem]{Conjecture}
\newtheorem{proposition}[theorem]{Proposition}
\newtheorem{lemma}[theorem]{Lemma}
\newtheorem{corollary}[theorem]{Corollary}
\newtheorem{question}[theorem]{Question}
\newtheorem{example}[theorem]{Example}
\author{Mikl\'os B\'ona \affiliationmark{1}\thanks{Partially supported by Simons Collaboration Grant 421967.}
  \and  Michael Cory \affiliationmark{2}\thanks{Supported by a UF Scholars award.}
 % \and Some Dummy\affiliationmark{3}
}
\title[Cyclic Permutations Avoiding Patterns]{Cyclic Permutations Avoiding Pairs of Patterns of Length Three}
\affiliation{
  % one line per affiliation, no postal codes, grant numbers or similar
  University of Florida, USA}
\keywords{permutations, cycles, pattern avoidance, enumeration}
\begin{document}
\publicationdetails{21}{2019}{2}{8}{5014}
\maketitle
\begin{abstract}
 We  enumerate  cyclic permutations avoiding two patterns of length three each
by providing explicit formulas for all but one of the pairs for which no such formulas were known. The pair $(123,231)$ proves
to be the most difficult of these pairs. We also prove a lower bound for the growth rate of the number of cyclic 
permutations that avoid a single pattern $q$, where $q$ is an element of a certain infinite family of patterns.
\end{abstract}

\section{Introduction}
The theory of {\em permutation patterns}
considers permutations as {\em linear orders}. That is, a permutation $p$ is simply a linear order $p_1p_2\cdots p_n$
of the integers $[n]=\{1,2,\cdots ,n\}$.  Let $p=p_1p_2\cdots p_n$ be a permutation, let $k<n$, and let $q=q_1q_2\cdots
q_k$ be another permutation. We say that $p$ {\em contains} $q$ as a pattern
if there exists a subsequence $1\leq i_1<i_2<\cdots <i_k\leq n$
 so that for all indices
$j$ and $r$, the inequality $q_j<q_r$ holds if and only if the inequality
$p_{i_j}<p_{i_r}$ holds. If $p$ does not contain $q$, then we say
that $p$ {\em avoids} $q$.  An exact formula for the number $S_n(q)$ of $q$-avoiding permutations of length $n$ is known for all
patterns $q$ of length three, and all patterns $q$ of length four, except 1324, and its reverse, 4231. There are numerous
other results on the growth rate of the sequences $S_n(q)$ as well. See \cite{vatter} for an overview of these results.

Questions about pattern avoidance  become much more difficult if we also consider permutations
as {\em elements of the symmetric group}, or even just bijections over the set $[n]$ that have a cycle decomposition.

In this paper, we study pattern avoiding permutations that {\em consist of a single cycle}, or, as we will call them,
{\em cyclic permutations}.  Let $C_n(q)$ be the number of cyclic permutations of length $n$ that avoid the pattern $q$. Similarly, let $C_n(q,q')$ be the
number of cyclic permutations that avoid both patterns $q$ and $q'$.   The problem of determining
$C_n(q)$ for any given pattern $q$ of length three was raised by Richard Stanley at the Permutation Patterns conference in 2007. No such formulas have been found. The main result of the paper will be an explicit
formula for the sequence $C_n(123,231)$ counting cyclic permutations that avoid both 123 and 231. We will also prove an explicit enumeration formula for the easier pair
$(123,132)$. Taken together with a result of Archer and Elizalde \cite{archer}, which was the first non-trivial result of the
field,  and some straightforward pairs that we handle in Section \ref{others},
this will complete the analysis of cyclic permutations avoiding any given pair of patterns of length three, except 
for the pair $(132,213)$. For that pair of patterns, an exact formula is still not known, but an upper bound has recently been proved
by Brice Huang \cite{huang}.

The cited results of \cite{archer} and \cite{huang}, and the results of this paper, enable us to make the following comparison. 
Let $q$ and $q'$ be two distinct patterns of length three each. Let 
$S_n(q,q')$ be the number of
{\em all} permutations of length $n$ that avoid both patterns $q$ and $q'$. See \cite{combperm} for exact enumeration formulas for the
numbers $S_{n}(q,q')$.  Using those formulas, 
\[\lim_{n\rightarrow \infty} \frac{C_{n}(q,q')}{S_{n}(q,q')} =0.\]

We end the paper by stating some open problems and conjectures. We solve a special case of one of the conjectures, proving that
if $q$ is an element of a certain infinite family of patterns, then $2C_n(q) \leq C_{n+1}(q) $ for $n\geq 2$. 

\section{The pair $(123,231)$}
In this section, we enumerate cyclic permutations that avoid both 123 and 231. This is the most difficult of the pairs we handle in this paper.  We start by proving a collection of structural properties of such permutations.
We will use some basic facts about inversions of permutations and conjugacy classes in the symmetric group. These facts can be found in many
introductory combinatorics textbooks, such as \cite{awalk}.

\subsection{Preliminary lemmas}
\subsubsection{Bounds on layer sizes}

First, we show what a typical cyclic permutation that avoids both 123 and 231 must look like. Recall that an involution is
a permutation whose square is the identity permutation. In other words, an involution is a permutation in which each
cycle is of length 1 or 2. 
\begin{lemma} \label{layers}
Let $p$ be a permutation  of length $n$ that avoids the patterns 123 and 231 and which is not an involution. Then there exist three positive integers
$a$, $b$, and $c$ so that $a+b+c=n$, and
\[p= n \ n-1 \cdots (n-a+1) \ \ b \ (b-1) \cdots 1 \ \ (b+c) \ (b+c-1) \cdots (b+1).\]
\end{lemma}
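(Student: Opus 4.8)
The plan is to establish a sharper dichotomy and then read the lemma off it. Say that a permutation of length $n$ is of \emph{type~(A)} if it equals $(m-1)(m-2)\cdots 1\ n\ (n-1)(n-2)\cdots m$ for some $m$ with $1\le m\le n$, and of \emph{type~(B)} if it has the three-layer form in the statement for some positive integers $a,b,c$ with $a+b+c=n$. I claim that a permutation avoids $123$ and $231$ if and only if it is of type~(A) or type~(B), and that the type-(A) permutations are precisely the involutions in this class. Granting this, the lemma is immediate: a type-(B) permutation $p$ has $p_1=n$ and $p_n=b+1\ge 2$, so $p\bigl(p(1)\bigr)=p_n\ne 1$ and $p$ is not an involution; hence a non-involution avoiding $123$ and $231$ is forced to be of type~(B).

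I would prove the forward implication by induction on $n$, the cases $n\le 2$ being vacuous since every permutation of length at most two is an involution. For the inductive step, let $n$ occupy position $m$ of $p$. Two one-line pattern arguments pin down the coarse shape. If $p_i<p_j$ for some $i<j<m$, then $p_i\,p_j\,p_m$ is an occurrence of $123$, so the entries to the left of $n$ are decreasing. If $p_i>p_k$ for some $i<m<k$, then $p_i\,p_m\,p_k$ is an occurrence of $231$, so every entry to the right of $n$ exceeds every entry to its left. Together these force $\{p_1,\dots,p_{m-1}\}=\{1,\dots,m-1\}$, appearing in decreasing order, and $p_{m+1}\cdots p_n$ to be a permutation of $\{m,\dots,n-1\}$ that again avoids $123$ and $231$.

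Next I would split on whether $m=1$. If $m\ge 2$, then $p_1=m-1$ lies to the left of $n$ and below every entry of the suffix $p_{m+1}\cdots p_n$, so any ascent $p_j<p_{j+1}$ in that suffix would create the $123$ pattern $p_1\,p_j\,p_{j+1}$; hence the suffix is decreasing and $p$ is of type~(A). If $m=1$, write $p=n\cdot R$ with $R=p_2\cdots p_n$ a $123$- and $231$-avoiding permutation of $\{1,\dots,n-1\}$, and apply the inductive hypothesis to $R$. If $R$ is of type~(B), then prepending $n$ to its top layer gives type~(B) for $p$ with the same $b,c$ and $a$ one larger. If $R$ is of type~(A), say $R=(m'-1)\cdots 1\ (n-1)\ (n-2)\cdots m'$, then $p=n\,(m'-1)\cdots 1\,(n-1)\cdots m'$; for $m'=1$ this is the decreasing permutation (type~(A) again), and for $m'\ge 2$ it is exactly the three-layer form with $a=1$, $b=m'-1$, $c=n-m'$. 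Finally, a type-(A) permutation acts as an order-reversing involution on each of the two blocks $\{1,\dots,m-1\}$ and $\{m,\dots,n\}$, which partition $\{1,\dots,n\}$, hence is an involution; together with the remark above about type~(B), this shows the involutions in the class are exactly the type-(A) permutations.

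The pattern-avoidance steps are all one-liners, so I do not expect any serious analytic obstacle. The part that needs care is the bookkeeping: setting up the inductive statement so that deleting the largest entry keeps us inside the list of admissible forms, and remembering that the recursion genuinely produces the type-(A) family (including the plain decreasing permutation, via the ``$m=1$ and $R$ decreasing'' branch), which must be carried along even though it is discarded once we specialize to non-involutions. I expect that bookkeeping, rather than any single step, to be the main obstacle.
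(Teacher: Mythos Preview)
Your proof is correct and follows essentially the same approach as the paper: both use the observations that entries left of $n$ are decreasing (else $123$), entries right of $n$ dominate entries left of $n$ (else $231$), and that when $n$ is not leftmost the suffix must also be decreasing (giving the involution case). The only difference is organizational: the paper argues directly in two steps---first showing $p_1=n$, then letting $a$ be the maximal length of the initial run $n,n-1,\dots$ and reapplying the structural argument once to the remaining $n-a$ entries---whereas you package the same reasoning as a full induction on $n$ and along the way prove the sharper if-and-only-if classification of all $\{123,231\}$-avoiders into types~(A) and~(B).
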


In other words, the lemma states that $p$ consists of three decreasing subsequences of consecutive integers in consecutive positions, namely, $p$ starts with a decreasing subsequence of its $a$ largest entries, then continues with a decreasing subsequence of its $b$ smallest entries, and then it ends in a decreasing subsequence of its $c$ remaining entries. These three decreasing subsequences will be called the {\em layers} of $p$.  For instance, if $n=9$, and
the layer lengths are $a=4$, $b=2$ and $c=3$, then $p=987621543$. See Figure \ref{firsttriple} for an illustration. 

\begin{figure}[htbp]
 \begin{center}
  \includegraphics[width=60mm]{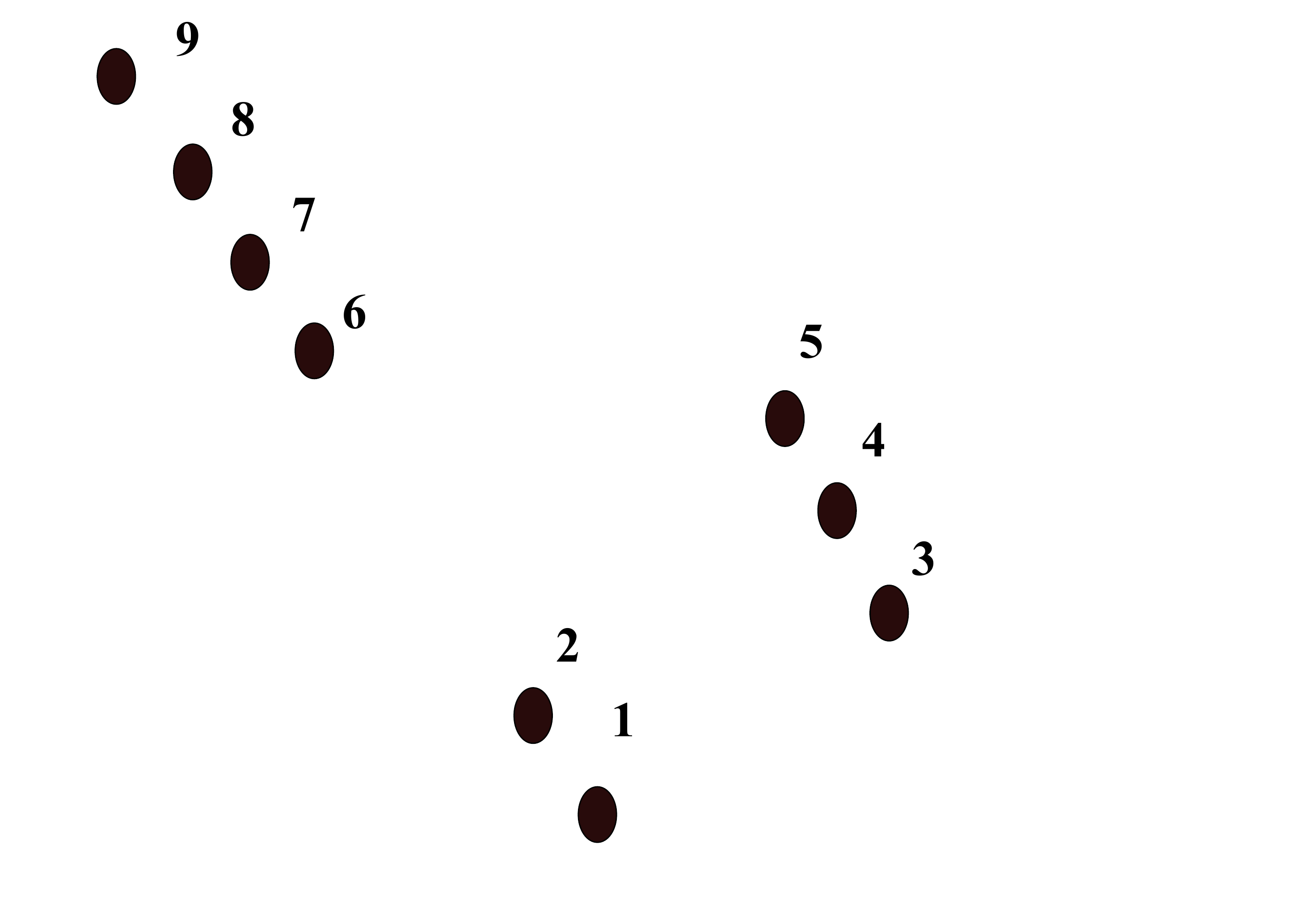}
  \label{firsttriple}
\caption{The permutation $p$ that belongs to the triple $(4,2,3)$. }
 \end{center}
\end{figure}

\begin{proof}
All entries preceding the entry $n$ have to be smaller than all entries following $n$ or a 231-pattern would be formed.
All entries preceding the entry $n$ must be in decreasing order or a 123-pattern would be formed.

If $n$ is not the leftmost entry, then this means that all entries on the right of $n$ must be in decreasing order, or  a 123-pattern is formed. So if $n$ is not the leftmost entry, then
\[p= i \ (i-1) \cdots 1 \ \ n \ (n-1) \cdots (i+1),\] but then $p$ is an involution.

That is, if $p$ is not an involution, then $p=p_1p_2\cdots p_n$ starts with the entry $p_1=n$. Let $a$ be the largest
integer so that we have $p_1p_2\cdots p_a=n(n-1)\cdots (n-a+1)$.  As $p$ is not an involution, it follows that $a\leq n-2$.
Repeating the argument of the first  paragraph of this proof for the remaining entries $\{1,2,\cdots ,n-a\}$ of $p$,
we see that they must form a string of the form  \[\ b \ (b-1) \cdots 1 \ \ (n-a) \ (n-a-1) \cdots (b+1)\]
for some $b<n-a$.
\end{proof}

Note that Lemma \ref{layers} implies that the total number of permutations (cyclic or not) of length $n$ that avoid both 123 and 231 is $1+{n\choose 2}$.

Another way to state the result of Lemma \ref{layers} is that if $p$ is a cyclic permutation that avoids 123 and 231, then
\begin{equation} \label{rules}  p_i= \left\{ \begin{array}{l@{\ }l}
n+1-i \hbox{  if $1\leq i\leq a$},\\
 a+b+1-i \hbox{  if $a+1\leq i\leq a+b$, and  }
\\n+b+1-i \hbox{ if $a+b+1\leq i\leq n$.}
\end{array}\right.
\end{equation}

We will use the identities stated in (\ref{rules}) in the rest of this section without referencing (\ref{rules}) each time.

We will call the permutation $p$ defined by the triple $(a,b,c)$ the permutation {\em of} that triple.
 We will call a triple $(a,b,c)$ a {\em good} triple if its permutation is cyclic. 

Now we are going to prove some results, mostly necessary conditions, regarding the parameters $a$, $b$, and $c$ of good triples.

\begin{proposition} \label{conjugates}
The triple $(a,b,c)$ is good if and only if the triple $(a,c,b)$ is good.
\end{proposition}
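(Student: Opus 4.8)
The plan is to show that the permutation $p'$ of the triple $(a,c,b)$ is conjugate, in the symmetric group $S_n$, to the permutation $p$ of the triple $(a,b,c)$. Since conjugate permutations have the same cycle type, $p$ is a single $n$-cycle if and only if $p'$ is, which is exactly the assertion that $(a,b,c)$ is good if and only if $(a,c,b)$ is. Writing $w_0$ for the reversal permutation defined by $w_0(i)=n+1-i$, the identity I would prove is
\[ p' \;=\; w_0\, p^{-1}\, w_0 ; \]
equivalently, $p'$ is the reverse-complement of $p^{-1}$. Granting this, $p'$ is a conjugate of $p^{-1}$, and $p^{-1}$ has the same cycle type as $p$ because inverting a permutation inverts each of its cycles without changing their lengths. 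Hence $p$ and $p'$ have the same cycle type, and the proposition follows.

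To verify the displayed identity I would argue directly from the formulas in (\ref{rules}). Inverting the three pieces of (\ref{rules}) one at a time gives
\[ p^{-1}(j) \;=\; \begin{cases} a+b+1-j, & 1 \le j \le b,\\ n+b+1-j, & b+1 \le j \le b+c,\\ n+1-j, & n-a+1 \le j \le n. \end{cases} \]
Now substitute $j=n+1-i$ and simplify, using $a+b+c=n$ (so that, for instance, $n-b=a+c$). The first range becomes $a+c+1\le i\le n$ and yields $i\mapsto n+c+1-i$; the second range becomes $a+1\le i\le a+c$ and yields $i\mapsto a+c+1-i$; the third range becomes $1\le i\le a$ and yields $i\mapsto n+1-i$. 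Comparing with (\ref{rules}), this is precisely the one-line description of the permutation of the triple $(a,c,b)$, i.e. it is $p'$.

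The computation above is the whole of the argument, and the only thing demanding care is the bookkeeping: after the substitution $j=n+1-i$ one must check that the three index intervals transform into exactly the three intervals appearing in (\ref{rules}) for $(a,c,b)$, with the correct affine formula on each. There is no real obstacle; conceptually the statement is just the familiar fact that $p$, its inverse $p^{-1}$, and any conjugate $w_0 p^{-1} w_0$ all lie in the same conjugacy class of $S_n$, together with the observation that swapping $b$ and $c$ in the layered form corresponds exactly to applying $\pi\mapsto w_0\pi w_0$ to $p^{-1}$.
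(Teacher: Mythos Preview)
Your proof is correct and follows the same route as the paper: both establish the identity $p' = w_0\, p^{-1}\, w_0$ (the paper writes it as $p = w\, q^{-1}\, w$, which is the same relation with the roles of the two permutations swapped), and then conclude via the fact that conjugation and inversion preserve cycle type. The only difference is in how the identity is verified: the paper factors $p = w x$ with $x$ the cyclic rotation of the last $b+c$ positions by $b$, observes that $q = w x^{-1}$, and finishes algebraically, whereas you check it directly case by case from the piecewise formulas in~(\ref{rules}).
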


\begin{proof} It suffices to show that the permutations of those two triples are conjugates of each other, since that implies that they have the same cycle structure. In order to see that the permutation $p$ of the triple $(a,b,c)$ and the
permutation $q$ of the triple $(a,c,b)$ are conjugates, let $w$ be the {\em decreasing} permutation of length $n$.

For the rest of this paper, we will multiply permutations {\em left to right}, so $rs$ means that we first apply the permutation $r$ to the set $[n]$, then we apply the permutation $s$ to that set.

Then $p=wx$, where $x(i)=i$ if $i>b+c$, and $x(i)=i-b$ (modulo $b+c$) if $i\leq b+c$. (So $x$ cyclically rotates the string of the last $b+c$ entries of $w$ forward by $b$ positions.)
On the other hand, $q=w x^{-1}$, since $x^{-1}$ rotates that same string backward by $b$ positions, which is the same as rotating it forward by $c$ positions.

Note that $w$  is an involution, so $q^{-1}=xw$, and so $wq^{-1}w=w(xw)w=wx=p$. Therefore, $p$ is a conjugate
of $q^{-1}$, and therefore, of $q$.
\end{proof}

\begin{proposition} \label{upper}
If the triple $(a,b,c)$ is good, then $a\leq \lfloor n/2 \rfloor $ and, $c\leq \lfloor n/2 \rfloor $.
\end{proposition}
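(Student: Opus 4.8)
The plan is to prove the contrapositive, and to split it into two parts: (i) if $a>\lfloor n/2\rfloor$ then the permutation $p$ of the triple $(a,b,c)$ is not cyclic, and (ii) if $c>a$ then $p$ is not cyclic. Granting both, a good triple automatically satisfies $c\le a\le \lfloor n/2\rfloor$, which is the assertion. Throughout I would work directly from the explicit formulas~(\ref{rules}), and I would use the fact that a good triple has $a,b,c\ge 1$ by Lemma~\ref{layers}, so that $n\ge 3$ and hence a cyclic permutation of $[n]$ has no cycle of length $1$ or $2$.

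For part~(i): the first layer of $p$ occupies positions $1,2,\ldots,a$, and by~(\ref{rules}) we have $p_i=n+1-i$ there, so the values appearing in these positions are exactly $n,n-1,\ldots,n-a+1$. If $a>\lfloor n/2\rfloor$ then $n+1-a\le a$, so there is an index $i$ with $n+1-a\le i\le a$. For such an $i$, both $i$ and $n+1-i$ lie in $\{1,\ldots,a\}$, hence $p_i=n+1-i$ and $p_{n+1-i}=i$. Thus the set $\{i,n+1-i\}$ is a union of cycles of $p$ of total length at most $2$ (a transposition if $i\ne n+1-i$, a fixed point if $i=n+1-i$), so $p$ cannot be an $n$-cycle.

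For part~(ii): I would run the same argument on the third layer, which occupies positions $a+b+1,\ldots,n$ with $p_i=n+b+1-i$, taking the values $b+1,\ldots,b+c$. If $c>a$, then $a+b+1\le b+c=n-a$, so the position $i_0:=a+b+1$ is itself one of the values $b+1,\ldots,b+c$; a one-line check using~(\ref{rules}) gives $p_{i_0}=n-a$ and $p_{n-a}=i_0$, so again $p$ has a cycle of length at most $2$ and is not cyclic. Combining (i) and (ii) finishes the proof. (Note that Proposition~\ref{conjugates} does \emph{not} shortcut part~(ii), since it exchanges $b$ and $c$, not $a$ and $c$, and so tells us nothing new about the size of $c$.)

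I do not expect a real obstacle here: the whole content is the observation that an overly long first (resp.\ third) layer is forced to swap or fix a pair of positions, and the remaining steps are short inequality checks on~(\ref{rules}). The only point deserving a word of care is the degenerate case $i=n+1-i$ (resp.\ $i_0=n-a$, which happens exactly when $c=a+1$): there one gets a fixed point rather than a $2$-cycle, but this is equally incompatible with being an $n$-cycle once $n\ge 3$.
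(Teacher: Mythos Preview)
Your proof is correct. Part~(i) is essentially the paper's argument: the paper just names the specific index ($i=k+1$ when $n=2k+1$, giving a fixed point; $i=k$ and $i=k+1$ when $n=2k$, giving a $2$-cycle), while you phrase it as ``any $i$ with $n+1-a\le i\le a$'', but the content is identical.

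Part~(ii) genuinely differs. The paper bounds $c$ directly by $\lfloor n/2\rfloor$ using an intermediate-value (``crossing'') argument along the third layer: the entry starts at or above the position index and ends below it, so somewhere they meet or cross, yielding a fixed point or a $2$-cycle. You instead prove the stronger inequality $c\le a$ by exhibiting one explicit pair, $i_0=a+b+1$ and $n-a=b+c$, and then deduce $c\le\lfloor n/2\rfloor$ from part~(i). Your route is a bit cleaner and, as a bonus, already contains half of the paper's next proposition (which establishes $a\ge b$ and $a\ge c$; the paper obtains $a\ge c$ there only indirectly, via Proposition~\ref{conjugates} and $a\ge b$). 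Your parenthetical remark that Proposition~\ref{conjugates} does not help with~(ii) is apt: the paper does not invoke it here either.
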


\begin{proof} If $n=2k+1$, and $a\geq k+1$, then $p_{k+1}=k+1$ is a fixed point. If $n=2k$, and $a\geq k+1$,
then $p_k=k+1$ and $p_{k+1}=k$ form a 2-cycle.

Similarly, assume that $c> \lfloor n/2 \rfloor$. Then the third layer of $p$ starts in position $n-c+1$, in the entry
$b+c$. So at that position, the entry in the position is larger than the index of the position. Moving to the right one
position at a time, the index of the position will increase by 1 at each step, while the entry in the position will decrease
by one. At the end, we will be at position $n$, that will contain the entry $b+1$. So at the end, the index of the position is
larger than the entry in it. As both the index and the content of our position changed one by one, there had to be
a leftmost position $j$ where the index $j$ was at least as large as the entry $p_j$. If, at that point, equality held, then
$j=p_j$ is a fixed point $p$. If, on the other hand, at that point $j>p_j$ held, then $p_j=j-1$, and therefore, $p_{j-1}=j$, and
$(j-1 \ j)$ is a 2-cycle in $p$.
\end{proof}

The following corollary is a direct consequence of Propositions \ref{conjugates} and \ref{upper}. 

\begin{corollary}
If the triple $(a,b,c)$ is good, then $b\leq \lfloor n/2 \rfloor$.
\end{corollary}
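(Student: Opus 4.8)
The plan is simply to chain the two preceding propositions together, so the argument is a one-step deduction. First I would invoke Proposition~\ref{conjugates}: since the triple $(a,b,c)$ is good, the triple $(a,c,b)$ is good as well, because their permutations are conjugate and hence have the same cycle type. This is the only place where the symmetry between the last two layers is used.

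Next I would apply Proposition~\ref{upper} to the good triple $(a,c,b)$. That proposition asserts that for \emph{any} good triple, both the first entry and the third entry are at most $\lfloor n/2\rfloor$. Reading it off for the triple $(a,c,b)$, whose third entry is $b$, gives $b\le\lfloor n/2\rfloor$, which is exactly the claim. (As a sanity check, applying it to $(a,c,b)$ also re-derives $a\le\lfloor n/2\rfloor$, consistent with what Proposition~\ref{upper} already gives for $(a,b,c)$.)

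There is essentially no obstacle here; the only thing to be careful about is the bookkeeping of which coordinate of the permuted triple plays the role of ``$c$'' in Proposition~\ref{upper}. For readers who prefer not to route through the conjugation, I would remark that one can instead argue directly, imitating the proof of Proposition~\ref{upper}: if $b>\lfloor n/2\rfloor$, then the middle layer occupies positions $a+1,\dots,a+b$ and contains the entries $b,b-1,\dots,1$, so at position $a+1$ the content $b$ is at least the index when $a+1\le b$, while at position $a+b$ the content is $1$, far below the index; tracking the index-minus-content difference as it changes by one at each step forces a leftmost position that is either a fixed point or half of a $2$-cycle, contradicting that $p$ is cyclic. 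But invoking Propositions~\ref{conjugates} and~\ref{upper} is the cleaner route, and that is what I would write.
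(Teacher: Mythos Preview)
Your proposal is correct and matches the paper's approach exactly: the paper states that the corollary is a direct consequence of Propositions~\ref{conjugates} and~\ref{upper}, and you have spelled out precisely that deduction. The alternative direct argument you sketch is a reasonable aside, but the chained application of the two propositions is what the authors intend.
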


\begin{proposition}
If the triple $(a,b,c)$ is good, then $a\geq b$, and $a\geq c$.
\end{proposition}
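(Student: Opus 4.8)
The plan is to establish just the inequality $a \geq b$ for every good triple; the inequality $a \geq c$ then comes for free, because by Proposition~\ref{conjugates} the triple $(a,c,b)$ is good if and only if $(a,b,c)$ is, so applying ``$a \geq b$'' to the triple $(a,c,b)$ yields ``$a \geq c$'' for $(a,b,c)$. I would argue by contraposition: supposing $a < b$, I will show that the permutation $p$ of the triple $(a,b,c)$ has a cycle of length at most $2$, which prevents it from being an $n$-cycle since here $n = a+b+c \geq 1+2+1 = 4 > 2$.

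The key point is that this short cycle is already visible inside the second layer. Assume $a<b$, so $b \geq a+1$. Then the two positions $a+1$ and $b$ both lie in the interval $[a+1,\,a+b]$ of positions occupied by the second layer: for the left endpoint one uses $a+1 \leq b$ (which is exactly $a<b$), and for the right endpoint $b \leq a+b$. Hence (\ref{rules}) gives $p_{a+1} = a+b+1-(a+1) = b$ and $p_b = a+b+1-b = a+1$. If $a+1 < b$, this means $(a+1\ b)$ is a transposition of $p$; if $a+1 = b$, it means $a+1$ is a fixed point of $p$. Either way $p$ has a cycle of length at most $2$, so $(a,b,c)$ is not good. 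More conceptually, the map $i \mapsto a+b+1-i$ restricts to an involution of the block of positions $\{a+1,\dots,b\}$ (nonempty precisely when $b>a$), and $p$ agrees with that involution there, so $p$ cannot act as a single $n$-cycle.

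I do not expect any serious obstacle here: once the reduction via Proposition~\ref{conjugates} is in place, everything follows from the explicit description (\ref{rules}). The only things needing a little care are the elementary inequality checks showing that $a+1$ and $b$ are indeed second-layer positions (equivalently, that $\{a+1,\dots,b\}$ is $p$-invariant), and the bookkeeping ``$a,b,c \geq 1$ and $a<b$ imply $n \geq 4$'', which is what makes a cycle of length $\leq 2$ genuinely incompatible with $p$ being cyclic.
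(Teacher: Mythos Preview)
Your proof is correct and follows the same overall strategy as the paper: reduce to $a\geq b$ via Proposition~\ref{conjugates}, then assume $a<b$ and exhibit a short cycle inside the second layer using (\ref{rules}). The one minor difference is that the paper, after noting $p_{a+1}=b$, invokes a crossing argument (as in Proposition~\ref{upper}) to locate a fixed point or 2-cycle somewhere in the second layer, whereas you directly compute $p_b=a+1$ and thereby name the specific 2-cycle $(a+1\ b)$; your version is a bit more explicit but the underlying idea is the same.
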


\begin{proof} It follows from Proposition \ref{conjugates} that it suffices to prove $a\geq b$. Let us assume the contrary,
that is, that $b\geq a+1$.
Consider  the second layer of $p$. Its first entry is in  position $a+1$, and it is $b$. So $p_{a+1}=b$, then $p_{a+2}=b-1$, and this trend continues,  ending in $p_{a+b}=1$.  If $a+1=b$, then $a+1$ is a fixed point in $p$. If not, then, %crucially, the
 sequence of entries $b, b-1,...,1$ starts {\em above} the sequence of positions $a+1,a+2,...a+b$, but ends below it, so it crosses it somewhere, and then the proof is identical to that of the inequality $c\leq \lfloor n/2 \rfloor $ in Proposition \ref{upper}.
\end{proof}

\subsubsection{Restrictions related to common divisors of layer lengths}

It turns out that $b$ and $c$ cannot have large common divisors.
\begin{lemma} \label{cdivisors} If $(a,b,c)$  is a good triple, then the largest common divisor of $b$ and $c$ is 1 or 2.
Furthermore, if the largest common divisor of $b$ and $c$ is 2, then $a$ is even.
\end{lemma}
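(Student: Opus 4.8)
The plan is to constrain the cycle structure of the permutation $p$ of the triple $(a,b,c)$ by means of a single congruence linking each position $i$ with its image $p(i)$ modulo a common divisor of $b$ and $c$. So fix a positive common divisor $d$ of $b$ and $c$; I will show that $p$ is not cyclic if $d\ge 3$, and also not cyclic if $d=2$ and $a$ is odd. Applying the first case with $d=\gcd(b,c)$ then forces $\gcd(b,c)\le 2$, and applying the second case with $d=2$ forces $a$ even when $\gcd(b,c)=2$, which is exactly the lemma.

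The first step is the key identity: for every $i\in[n]$,
\[
i+p(i)\equiv a+1 \pmod d .
\]
This is a one-line check from the formulas in (\ref{rules}): since $d\mid b$ and $d\mid b+c$, we have $n=a+b+c\equiv a\pmod d$, and in the three layers $p(i)$ equals $n+1-i$, $a+b+1-i$, or $n+b+1-i$, each of which is congruent to $a+1-i$ modulo $d$. (Conceptually this holds because, by Proposition~\ref{conjugates}, $p=wx$ with $w$ the reversal and $x$ a rotation supported on an interval of length $b+c$; as $d\mid b+c$, reducing $x$ modulo $d$ gives a cyclic shift, so $p$ reduces modulo $d$ to the affine involution $i\mapsto (a+1)-i$.)

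Next I would push this through the cycle structure. In any cycle $(i_0\,i_1\,\cdots\,i_{k-1})$ of $p$, with $i_{t+1}=p(i_t)$, the key identity gives $i_{t+1}\equiv (a+1)-i_t\pmod d$, so the residues along the cycle alternate between $r$ and $(a+1)-r$, where $r=i_0\bmod d$. Hence the unordered pair $\{\,r,\ (a+1)-r\,\}\subseteq\mathbb{Z}/d$ is a constant of the cycle, and the whole cycle lies in the union of at most two residue classes modulo $d$. The involution $r\mapsto (a+1)-r$ of $\mathbb{Z}/d$ splits the $d$ residues into orbits of size $1$ or $2$, i.e.\ into at least $\lceil d/2\rceil$ orbits; and since $n=a+b+c>d$, every residue class of $[n]$ is nonempty, so every such orbit contains at least one cycle of $p$. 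Therefore $p$ has at least $\lceil d/2\rceil$ cycles. If $d\ge 3$ this is at least $2$. If $d=2$ and $a$ is odd, then $a+1$ is even, so $r\mapsto (a+1)-r$ is the identity on $\mathbb{Z}/2$, giving two orbits and again at least two cycles. In both cases $p$ is not cyclic, which yields the lemma as explained in the first paragraph.

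The crux is noticing the congruence $i+p(i)\equiv a+1\pmod d$; once it is available, the rest is routine bookkeeping about how an involution acts on $\mathbb{Z}/d$. I do not expect to need Proposition~\ref{conjugates} for the argument proper, since the congruence is verified uniformly over the three layers and is symmetric in $b$ and $c$, although one could use that proposition to reduce to the case $b\le c$ if desired.
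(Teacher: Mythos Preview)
Your proof is correct and follows essentially the same route as the paper. Both arguments hinge on the same congruence---the paper writes it as $p_i\equiv n+1-i\pmod{k}$, you as $i+p(i)\equiv a+1\pmod{d}$, which are equivalent since $n\equiv a\pmod{d}$---and then exploit that $p$ acts as the involution $r\mapsto (a+1)-r$ on residue classes. Your packaging is a bit slicker: you verify the congruence directly from the layer formulas rather than inductively, and you count orbits of the involution on $\mathbb{Z}/d$ to get at least $\lceil d/2\rceil$ cycles, whereas the paper splits into cases according to whether $j\equiv a+1-j\pmod{k}$ has a solution; but the substance is the same.
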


\begin{proof}
Let us assume the contrary, that is, that $b=fk$ and $c=gk$, with $k>2$.
% Let $a=kh+r$, with $0\leq r\leq k-1$.
The crucial observation is that in this case, $p$ permutes the {\em remainder classes} modulo $k$. In fact, we claim
that for all $i$, the equality
\begin{equation} \label{modk} p_i \equiv n+1 - i  \qquad{(\hbox{mod } k}) \end{equation}
holds. In order to prove (\ref{modk}), first note that it holds for $i=1$, since $p_1=n$. Now we show that (\ref{modk})
remains true for each index $i$, as we grow $i$ one by one.  First, note that (\ref{modk}) stays
true as long as $i\leq a$.  That is, note that  (\ref{modk}) stays
true while we are on the first layer, since every time we make one step to the right, both sides decrease by 1. When we pass from the first layer to the second, $i$ grows from $a$ to $a+1$, while $p_i$ decreases from
$n-a+1=(f+g)k+1$ to $b=fk$, so modulo $k$, it decreases by 1. So (\ref{modk}) remains true. After this, (\ref{modk})
remains true at each step to right one the second layer (since again, each step decreases both the left-hand side and the
right-hand side by 1). When we pass from the second layer to third, $i$ changes from $a+b$ to $a+b+1$, while
$p_i$ changes from 1 to $b+c=(f+g)k$, so modulo $k$, it decreases by 1. Finally, (\ref{modk}) remains true on the third
layer as it did on the first two layers.

Equality (\ref{modk}) shows that $p$ acts as an {\em involution} on the remainder classes modulo $k$. In particular, if
the equation $j \equiv n+1-j$ (modulo $k$), or, equivalently, $j \equiv a+1-j$ (modulo $k$),  has a solution $j$, then the remainder class of $j$ is mapped
onto {\em itself} by $p$. In other words, that remainder class is a union of cycles, so $p$ cannot be cyclic.

If the equation $j=a+1-j$ does not have a solution modulo $k$, then select any remainder class $i$, and the remainder
class $a+1-i$. These two classes are mapped onto each other, so they form a union of cycles in $p$. This union does not
contain all of $p$, since $p$ has $k>2$ remainder classes. So again, $p$ cannot be cyclic.

Finally, if $k=2$, and $a$ is odd, then the remainder class 1 maps onto itself. In other words, odd entries map into odd
entries, and even entries map into even entries, so $p$ is not cyclic.
\end{proof}

Lemma \ref{cdivisors} stops short of claiming that $b$ and $c$ must always be relatively prime to each other. The next proposition shows that
in some cases, they have to be. In the next section, we will see that those cases are not as rare as it might now seem.

\begin{proposition}
If $(a,b,c)$ is a good triple and $a=b+c$, then $b$ and $c$ are relatively prime to each other.
\end{proposition}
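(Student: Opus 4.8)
The plan is to exploit the special block structure that the hypothesis $a=b+c$ forces on $p$. When $a=b+c$ we have $n=2a$, and reading off (\ref{rules}) shows that the first $a$ positions of $p$ carry exactly the values $\{a+1,a+2,\dots,2a\}$ (in fact $p_i=2a+1-i$ there), while the last $a$ positions carry the values $\{1,2,\dots,a\}$. Writing $A=\{1,\dots,a\}$ and $B=\{a+1,\dots,2a\}$, this says $p(A)=B$ and $p(B)=A$. Consequently no cycle of $p$ is contained in $A$ or in $B$; every cycle alternates between the two blocks, hence has even length, and a cycle of $p$ of length $2\ell$ produces exactly one $\ell$-cycle in the restriction $p^2|_A$. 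In particular $p$ and $p^2|_A$ have the same number of cycles, so the reformulation I would use is: $p$ is a single $n$-cycle if and only if $p^2|_A$ is a single $a$-cycle.

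Next I would compute $p^2$ on $A$ directly. For $i\in A$ we have $p_i=2a+1-i\in B$, and then I need $p_{2a+1-i}$. The position $2a+1-i$ lies in the second layer $a+1\le j\le a+b$ precisely when $i\ge c+1$, and in the third layer otherwise; splitting into these two cases and applying (\ref{rules}) gives, after a short calculation,
\[
p^2(i)=\begin{cases} i+b & \text{if } 1\le i\le c,\\[2pt] i-c & \text{if } c+1\le i\le a. \end{cases}
\]
Since $a=b+c$, this is exactly the rotation $i\mapsto i+b \pmod a$ on the representatives $\{1,\dots,a\}$ of $\mathbb{Z}/a\mathbb{Z}$. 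A rotation by $b$ modulo $a$ is a single $a$-cycle if and only if $\gcd(a,b)=1$, and because $a=b+c$ we have $\gcd(a,b)=\gcd(b+c,b)=\gcd(b,c)$. Therefore $p$ is cyclic if and only if $\gcd(b,c)=1$, which proves the proposition (in fact it gives the stronger biconditional statement).

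The only genuinely delicate point is the case analysis in the computation of $p^2|_A$: under the left-to-right multiplication convention one must track carefully which layer of $p$ the intermediate value $2a+1-i$ lands in, and verify that the two resulting affine maps glue into the single modular rotation claimed. The reduction in the first paragraph is routine once one observes that $p$ has no cycle inside $A$ or inside $B$; I would justify it in one line using the standard fact that squaring a cycle of even length $2\ell$ splits it into two cycles of length $\ell$.
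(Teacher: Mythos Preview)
Your proof is correct, and it takes a genuinely different route from the paper. The paper argues by contradiction: invoking the earlier Lemma~\ref{cdivisors} (which already restricts $\gcd(b,c)$ to be $1$ or $2$), it assumes $b$ and $c$ are both even, observes that then $n=2a$ is even so a cyclic $p$ must be an odd permutation, and then computes the inversion number $I_{a,b,c}=\binom{a}{2}+\binom{b}{2}+\binom{c}{2}+a(b+c)$ and checks by a parity case analysis that it is even, contradiction. Your argument bypasses both the appeal to Lemma~\ref{cdivisors} and the inversion count entirely: you exploit the block-swap structure $p(A)=B$, $p(B)=A$ forced by $a=b+c$, reduce cyclicity of $p$ to cyclicity of $p^2|_A$, and identify the latter explicitly as the rotation $i\mapsto i+b\pmod a$. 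This is cleaner and self-contained; moreover it yields the full biconditional ``$(a,b,c)$ is good $\iff$ $\gcd(b,c)=1$ when $a=b+c$,'' which the paper only obtains later as Theorem~\ref{aisb+c} via a separate and longer walk-tracking argument. The paper's inversion-parity method, on the other hand, generalizes more readily to the cases $a=b+c-1$ and $a=b+c-2$ treated in Propositions~\ref{divby4} and~\ref{4k+2}, where no clean block-swap structure is available.
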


\begin{proof} Let us assume the contrary, that is, that $b$ and $c$ are both even numbers. Then so is $b+c=a$.
Furthermore,  $n=2a$ is even, so $p$ can only be cyclic if it is an {\em odd} permutation, that is, if it has
an odd number of inversions. On the other hand,  the number of inversions of $p$ is
\begin{equation} \label{inversions} I_{a,b,c}={a\choose 2}+{b\choose 2}+ {c\choose 2}+a(b+c).
\end{equation}  Note that if $x$ is an even number, then ${x\choose 2}$ is
odd if and only if $x=4k+2$ for some integer $k$. As $a=b+c$, this must hold for an even number of summands
out of the first three summands of $I_{a,b,c}$. As $a(b+c)$ is always even, it follows that $I_{a,b,c}$ is always even.
\end{proof}

\subsection{The size of the first layer}
The following lemma is probably the most suprising result of this paper. We have already seen in Proposition \ref{upper} that if $(a,b,c)$ is a good triple, then $a\leq n/2$. Interestingly, $a$ cannot be much smaller either.

\begin{lemma} \label{layerlengths} Let $(a,b,c)$ be a good triple, and let us assume that $b\leq c$. Recall that $n=a+b+c$. Then
\begin{enumerate}
\item if $n$ is even, then $a=n/2$ or $a=(n/2)-1$, and
\item if $n$ is odd, then $a=(n-1)/2$.
\end{enumerate}
\end{lemma}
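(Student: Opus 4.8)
The plan is to replace the condition ``$p$ is an $n$-cycle'' by a condition on a much smaller permutation, and then count that permutation's cycles. Write $d:=b+c-a$ throughout.

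\emph{Step 1: reformulate the claim.} By Proposition~\ref{upper} and the proposition asserting $a\ge b$ and $a\ge c$, a good triple with $b\le c$ satisfies $b\le c\le a\le\lfloor n/2\rfloor$. Since $n-d=2a$, this gives $d\equiv n\pmod 2$, $d\ge 0$ (and $d\ge 1$ when $n$ is odd), and $d\le\min(b,c)$. The assertion of the lemma is then literally the inequality $d\le 2$: for $n$ even, $d$ is even and $d\in\{0,2\}$ gives $a=n/2$ or $a=n/2-1$; for $n$ odd, $d$ is odd and $d=1$ gives $a=(n-1)/2$. So it suffices to assume $d\ge 3$ and derive a contradiction. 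Note that $d\ge 3$ forces $a\ge c\ge b\ge 3$ and $a=(n-d)/2<n/2$, so all three layers of $p$ and the block of positions $A:=\{1,\dots,a\}$ are nonempty.

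\emph{Step 2: pass to the first-return map.} Using the rules~(\ref{rules}) one checks that $p(A)\subseteq B\cup C$, that $p(B)\subseteq A$, and that $p(C)\subseteq A\cup B$ (the last using $d\le b$), where $B=\{a+1,\dots,a+b\}$ and $C=\{a+b+1,\dots,n\}$. Hence at most three applications of $p$ carry any point into $A$, so every cycle of $p$ meets $A$, and the first-return map $R\colon A\to A$ of $p$ is a well-defined permutation of $A$ whose cycles are in bijection with those of $p$. In particular $p$ is an $n$-cycle if and only if $R$ is an $a$-cycle. Composing $p$ with itself at most three times and simplifying with~(\ref{rules}) yields the explicit formula
\[
R(x)=\begin{cases}
x+b,& 1\le x\le a-b,\\
a+1-x,& a-b+1\le x\le c,\\
x-c,& c+1\le x\le a,
\end{cases}
\]
so that $R$ consists of two translations together with a single reflection acting on a block of exactly $d$ consecutive integers.

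\emph{Step 3: count the cycles of $R$ (the heart of the matter).} Let $\rho$ be the reversal $x\mapsto a+1-x$ of $A$, and set $\iota(x):=R(a+1-x)$, so that $R$ is the composition of $\iota$ and $\rho$. A direct check against the displayed formula shows that $\iota$ reverses $\{1,\dots,b-d\}$, fixes $\{b-d+1,\dots,b\}$ pointwise, and reverses $\{b+1,\dots,a\}$; in particular $\iota$ is an involution with at least $d\ge 3$ fixed points, while the involution $\rho$ has at most one fixed point. Thus $R$ is a product of two involutions, and its cycles correspond to the connected components of the graph on $A$ whose edges are the transpositions of $\iota$ together with those of $\rho$ (fixed points contributing loops): such a graph is a disjoint union of alternating paths and alternating even cycles, each component contributing at least one cycle of $R$. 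An $\iota$-fixed point that is not $\rho$-fixed has degree one in this graph, hence is an endpoint of a path component; since $\iota$ has at least three fixed points and $\rho$ at most one, counting these endpoints forces the graph to have at least two components. Therefore $R$ has at least two cycles, $p$ is not an $n$-cycle, and the assumption $d\ge 3$ is contradicted; so $d\le 2$, which is the lemma.

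I expect Step~3 to be where the real work lies: one must state and prove precisely the correspondence between the cycles of a product of two involutions and the components of the associated two-coloured graph, and then dispose carefully of the borderline configurations — the unique $\rho$-fixed point coinciding with an $\iota$-fixed point, or exactly two $\iota$-fixed points lying at the two ends of one path — in order to be certain the component count is genuinely at least two. Steps~1 and~2 are, by contrast, routine (if somewhat tedious) bookkeeping with~(\ref{rules}); it is also worth remarking that this argument never appeals to Lemma~\ref{cdivisors}, so it reproves those restrictions as a by-product.
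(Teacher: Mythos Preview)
Your proof is correct and genuinely different from the paper's. The paper argues by direct orbit tracing: assuming $a\le b+c-3$, it starts at the entry $r+1$ of the second layer (where $a=kb+r$), follows $p$ explicitly through positions on the first and third layers, and shows that the orbit returns to the second layer only at $b$ and then back at $r+1$, so a proper cycle is found containing at most two second-layer entries. Your approach instead passes to the first-return map $R$ on the first layer, factors it as $R=\iota\rho$ with $\rho$ the full reversal and $\iota$ an involution fixing a block of size $d=b+c-a$, and counts components of the associated two-coloured graph.

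The comparison: the paper's argument is entirely elementary and self-contained but requires careful index bookkeeping and feels somewhat ad hoc (why start at $r+1$?). Your factorization makes the role of the parameter $d$ transparent --- the obstruction to cyclicity is literally that $\iota$ has too many fixed points relative to $\rho$ --- and the fixed-point count gives a clean conceptual reason for failure. On the other hand, you correctly flag that Step~3 needs care: with exactly three $\iota$-fixed points and one $\rho$-fixed point one must check that the resulting degree-0 or degree-1 vertices cannot all lie in a single path component (they cannot: either the $\rho$-fixed point is $\iota$-fixed and isolated, giving one component plus at least one path, or it is not $\iota$-fixed and contributes a fourth degree-1 vertex, forcing at least two paths).

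One small correction: your closing remark that the argument ``reproves'' Lemma~\ref{cdivisors} as a by-product is not right. The bound $d\le 2$ says nothing about $\gcd(b,c)$ when $a=b+c$ (where $d=0$); the divisor restriction is a genuinely separate constraint. The paper's proof of Lemma~\ref{layerlengths} is likewise independent of Lemma~\ref{cdivisors}.
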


In other words, we never have more than two choices for $a$. This immediately proves the crude upper bound $C_n(123,231)\leq n$, since we never have more than $n/2$ choices for $b$.

In the rest of this paper, we will often consider  $p$ to be a directed path. For instance, if $p=2413$, then $p_1=2$,
$p_2=4$, $p_3=1$ and $p_4=3$, and so $p$ is a directed path that goes from 1 to 2 to 4 to 3 and then back to 1.
We also say that $p$ maps 1 to 2, 2 to 4, 4 to 3, and 3 to 1.

\begin{proof}
Let us assume the contrary, that is, that $a\leq b+c-3$. Note that as $a\geq c$, this implies that $b\geq 3$.
 Let $a=kb+r$, with $0\leq r\leq b-1$.

 We will show that the entry
$r+1$ of $p$ is part of a cycle that is shorter than $n$. In fact, we will show that it is part of a cycle that does not even
contain all entries of the second layer.

 Note that  because $r+1\leq b\leq c\leq a$, the equality $a=r+1$ could only hold if $a=b=c$ held, but that would imply that $r=0$, and $a=b=c=1$, contradicting the assumption that
$a\leq b+c-3$.  As $r+1\leq b$,  the entry $r+1$ is on the second layer of $p$, and $p$ maps it
to the first layer, to $p_{r+1}=n-r$. From there, $p$ continues to $p_{n-r}=n+b+1-(n-r)=b+r+1$ on the last layer, then to $p_{b+r-1}=n-b-r$ on the first layer, then again to $p_{n-b-r}=2b+r+1$ on the last layer, and so on. The important point is
that $0\leq a-c<b-2<b$, so $p$ will visit the last layer as many times ($k$ times) as the first layer before running out of space and
returning to the second layer. The last visit to the last layer before the first return to the second layer will be at $p_{n-r-(k-1)b}=kb+r+1=a+1$. From there,
$p$ goes to $p_{a+1}=b$. If $b$ happens to equal $r+1$, then we can stop, as we have just found a cycle that contains
only one entry form the second layer.

Otherwise, we follow $p$ a bit further. Next, $p$ goes to $p_b=n+1-b$ on the first layer, then to $p_{n+1-b}=2b$ on the last layer, and so on, making $k$
visits on each of the first and last layers. The last visit on the last layer will be at $p_{n+1-kb}=(k+1)b=kb+r+(b-r)=
a+(b-r)$. Finally, from here, we move on to the second layer, to $p_{a+b-r}=r+1$, where we started our walk. So we
have found a cycle in $p$ that contains only two entries, $r+1$ and $b$, from the second layer, completing our proof.
\end{proof}

\begin{example} {\em Let $a=7$, let $b=3$,  and let $c=7$. Then $n=17$, and }
\[p = 17 \ 16 \ 15 \ 14 \ 13 \ 12 \ 11 \ 3 \ 2 \ 1 \ 10 \ 9 \ 8 \ 7 \ 6 \ 5 \ 4 .\]

{\em We have $a=2b+1$, so $r=1$. Starting at $r+1=2$, the path of $p$ goes from 2 to 16 to 5 to 13 to 8  to 3 to
15 to 6 to 12 to 9 to 2, completing a cycle that contains only two entries from the second layer.}
\end{example}

\subsection{Positive results}
In this section, we will assume without loss of generality that $b\leq c$, unless stated otherwise. Our results will be positive, that is, they will show
that if $a$, $b$, and $c$ satisfy certain necessary conditions, then the triple $(a,b,c)$ is good.

\begin{theorem} \label{aisb+c}
Let $(a,b,c)$ be a triple of positive integers that satisfies $a=b+c=n/2$, with $b$ and $c$ relatively prime to each other.
Then $(a,b,c)$ is a good triple.
\end{theorem}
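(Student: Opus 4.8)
The plan is to make the permutation $p$ of the triple completely explicit from the identities~(\ref{rules}) and then to reduce the assertion ``$p$ is an $n$-cycle'' to a statement about a single cyclic shift on $a$ points. Since $a=b+c=n/2$, we have $n=2a$, and (\ref{rules}) shows that $p$ interchanges the two halves $B:=\{1,\dots,a\}$ and $T:=\{a+1,\dots,n\}$: the first layer maps $B$ bijectively onto $T$ by $i\mapsto 2a+1-i$; the second layer maps the positions $\{a+1,\dots,a+b\}$ onto $\{1,\dots,b\}$, and the third layer maps the positions $\{a+b+1,\dots,2a\}$ onto $\{b+1,\dots,a\}$, so together the last two layers map $T$ bijectively onto $B$.

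The key step is to compute $p^{2}$ on an element $j\in B$. First $p(j)=2a+1-j\in T$, and the threshold $j=c$ decides whether $2a+1-j$ lies in the second or the third layer; carrying out the two one-line substitutions gives $p^{2}(j)=j-c$ when $c+1\le j\le a$ and $p^{2}(j)=j+b$ when $1\le j\le c$. Because $a=b+c$, both formulas read $p^{2}(j)\equiv j+b\pmod a$, so, identifying $B$ with $\mathbb{Z}/a\mathbb{Z}$, the map $p^{2}|_{B}$ is exactly the shift $j\mapsto j+b$. Here is where relative primality enters: $\gcd(a,b)=\gcd(b+c,b)=\gcd(c,b)=1$, so this shift is a single $a$-cycle.

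To finish, I would record the elementary observation that a bijection $\pi$ of a set which interchanges two blocks $B,T$ of equal positive size has all of its cycles of even length, alternating between $B$ and $T$, and that a $\pi$-cycle of length $2\ell$ carries exactly $\ell$ elements of $B$, on which $\pi^{2}$ acts as an $\ell$-cycle; hence the cycles of $\pi$ are in bijection with those of $\pi^{2}|_{B}$. Applying this to $\pi=p$ and using that $p^{2}|_{B}$ is a single $a$-cycle forces $p$ to be a single cycle of length $2a=n$, that is, $(a,b,c)$ is good. I do not anticipate a real obstacle: the entire content is the reduction to $p^{2}|_{B}$ together with the observation that it is rotation by $b$ modulo $a$, and everything else is bookkeeping. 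It is worth noting that the argument genuinely needs $\gcd(b,c)=1$, not merely the weaker conclusion of Lemma~\ref{cdivisors}; together with the earlier necessary condition that $b$ and $c$ be relatively prime when $a=b+c$, this theorem in fact characterizes the good triples with $a=b+c$.
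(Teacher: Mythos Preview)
Your proof is correct, and it takes a genuinely different route from the paper's.

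The paper writes $a=kb+r$ with $1\le r\le b$ and follows the trajectory of $p$ starting at an arbitrary entry of the second layer, arguing that between two consecutive visits to the second layer the walk sweeps out an entire residue class modulo $b$ on the first and third layers; it then computes that the induced return map on the $b$ entries of the second layer is rotation by $r$ modulo $b$, and uses $\gcd(b,r)=1$ (deduced from $\gcd(b,c)=1$ via $c=(k-1)b+r$) to conclude. Your argument instead exploits the exact equality $a=b+c$ to observe that $p$ interchanges the two halves $B=\{1,\dots,a\}$ and $T=\{a+1,\dots,2a\}$, computes $p^{2}|_{B}$ directly from~(\ref{rules}) as rotation by $b$ modulo $a$, and finishes with $\gcd(a,b)=\gcd(b+c,b)=\gcd(c,b)=1$ together with the elementary fact that the cycles of a block-interchanging bijection biject with those of its square on one block.

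Your approach is cleaner here: it avoids the division with remainder, the two-case analysis based on $i\le r$ versus $i>r$, and the somewhat delicate tracking of when the walk leaves the first layer for the second. The paper's trajectory method, on the other hand, is more robust: the block-interchange observation depends on $a=b+c$ exactly, whereas the walk-tracking argument adapts (with extra bookkeeping for one or two ``exceptional'' entries) to the cases $a=b+c-1$ and $a=b+c-2$ of Theorems~\ref{diff1} and~\ref{diff2}, where $p$ no longer swaps two equal halves.
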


\begin{proof}  Let $a=kb+r$, with $1\leq r\leq b$,  then $c=(k-1)b+r$, and
$n=a+b+c=2a=2kb+2r$.  Note that this implies that $b$ and $r$ are relatively prime to each other. Indeed,
if $b=xd$ and $r=yd$ held for some $d>1$, then  $c=(k-1)b+r$ would also be divisible by $d$, which is a contradiction.

 Let us start following $p$, beginning at any entry $i$ on the second layer. So
$1\leq i\leq b$.  From that entry, $p$ goes to $p_i=n+1-i$,
then to $p_{n+1-i}=b+i$, then to $p_{b+i}=n+1-b-i$, and so on. In the first layer, $p$ will visit positions $i$, $b+i$, $2b+i$, and
so on, while on the last layer, $p$ will visit positions $n$, $n-b$, $n-2b$, and so on.
Continuing in this way, $p$ will visit all entries of the first layer whose position index is congruent to $i$ modulo $b$, and
all entries on the last layer whose position index (when counted from the {\em right}) is congruent to $i$ modulo $b$
before returning to the second layer. Therefore, in order to prove that $p$ is cyclic, {\em it suffices to prove that $p$ contains
all $b$ entries of its second layer in one cycle}. Indeed, we have just seen that between two visits to the second layer,
$p$ covers an entire remainder class of positions on the first and third layers. So if a cycle contains all entries of the second
layer, then that cycle contains all entries of $p$.

Crucially, as the first layer is $b$ units
longer than the last layer, $p$ will run out of space on the last layer first. In other words, $p$ will always arrive at the second layer from the first layer.

 That is, if $i\leq r$, then as $p$ arrives at position $kb+i$
on the first layer, it finds the entry $p_{kb+i}=n+1-kb-i=a+r+1-i$ there, and then it goes to the {\em second} layer, to the entry
$p_{a+r+1-i}=b+i-r$. If $i>r$, then as $p$ arrives at position $(k-1)b+i$ on the first layer, it finds the entry $p_{(k-1)b+i}=
a+b+r+1-i$ there, and then it goes to the {\em second} layer, to the entry $p_{a+b+r+1-i}=i-r$.

So in all cases, the first entry that $p$ visits on the second layer after visiting $i$ is the entry that is congruent to
$i-r$ modulo $b$. In other words, each visit of the second layer occurs $r$ spots to the right of the last one, modulo $b$.

However, that implies that $p$ will visit all its entries on the second layer before returning to its starting point $i$, since
$r$ is relatively prime to $b$, the length of the second layer.
\end{proof}

\begin{example} {\em Let $a=7$, let $b=3$, and let $c=4$. Then $n=14$, and }
\[p= 14 \ 13 \ 12 \ 11 \ 10 \ 9 \ 8 \ 3 \ 2 \ 1 \ 7 \ 6 \ 5 \ 4 .\]
{\em Let us start at $i=1$. Then $p$ goes from 1 to 14 to 4 to 11 to 7 to 8 to 3 to 12 to 6 to 9 to 2 to 13 to 5 to 10 to 1.
See Figure \ref{for2-11} for an illustration. Note that for each pair of layers $(L,M)$, we represented all movements from $L$ to $M$ using the same kind of arrows, and still we only needed four arrow types instead of the potential maximum, nine. 
This shows the relative simplicity of the action of $p$.

\begin{figure}[htbp]
 \begin{center}
  \includegraphics[width=60mm]{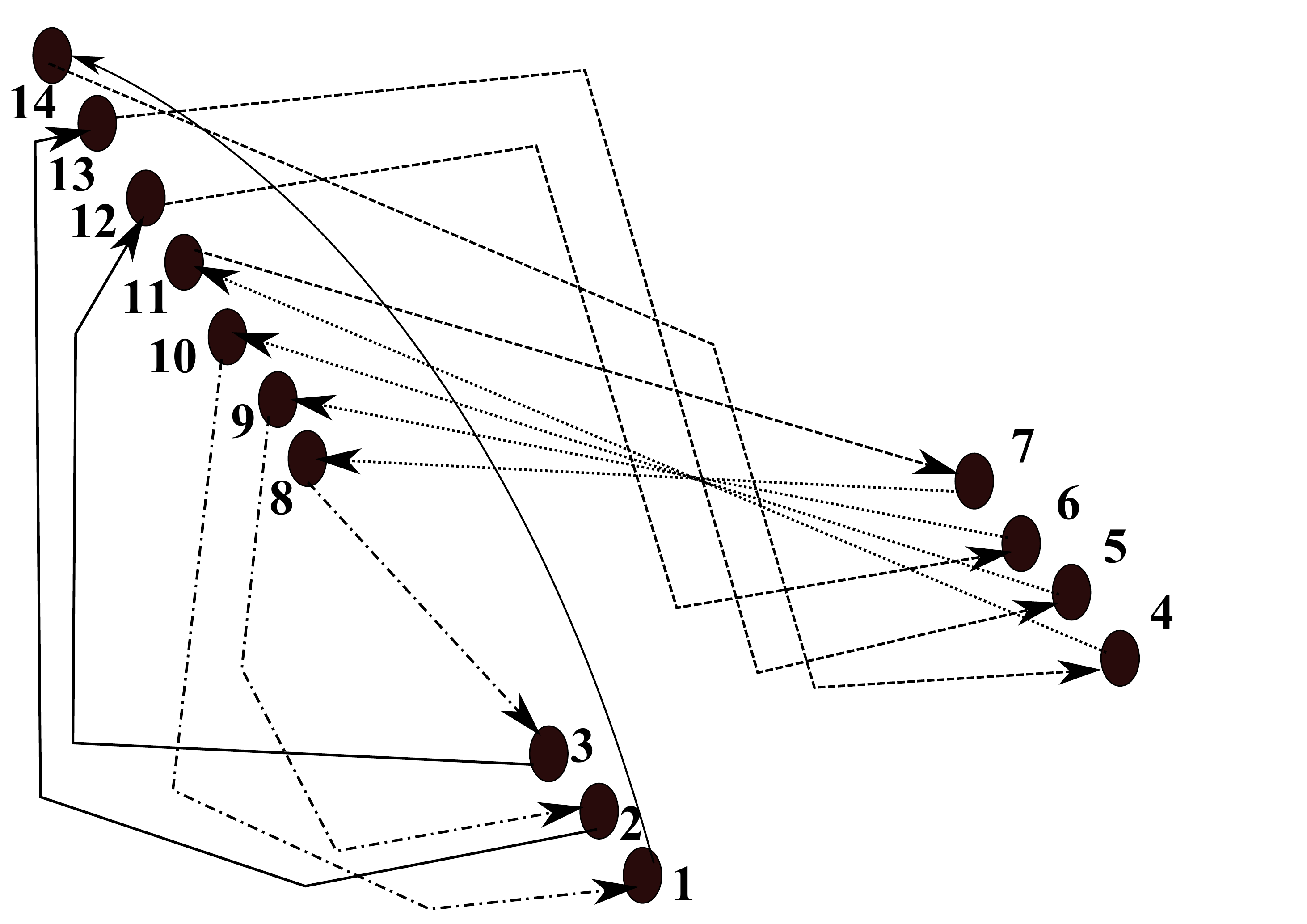}
  \label{for2-11}
\caption{The action of $p= 14 \ 13 \ 12 \ 11 \ 10 \ 9 \ 8 \ 3 \ 2 \ 1 \ 7 \ 6 \ 5 \ 4 $. }
 \end{center}
\end{figure}

Note that between the first and second visit to the second layer, $p$ visits all entries $p_j$ on the first layer where $j=3\ell +1$, and all entries on the last layer that are in position $3\ell +1$ when counted from the end. Between the second and third visits of $p$ to the second layer, the same goes for entries in positions $3\ell$, and between the third and fourth visits of $p$ to the second layer, the same goes for entries in positions $3\ell +2$. }
\end{example}

\begin{theorem} \label{diff1}
Let $(a,b,c)$ be a triple of positive integers that satisfies $a=b+c-1$, with $a\geq c\geq b$, and with $b$ and $c$ relatively
prime to each other.
Then $(a,b,c)$ is a good triple.
\end{theorem}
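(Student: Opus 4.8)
The plan is to follow the strategy of the proof of Theorem~\ref{aisb+c}, viewing $p$ as a directed path. The hypothesis $a\ge c$ is automatic here, since $a=b+c-1\ge c$ whenever $b\ge 1$, and $n=a+b+c=2(b+c)-1$ is odd. Because $b$ and $c$ are coprime with $b\le c$, either $b=c=1$ — in which case $a=1$ and $p=312$ is a single $3$-cycle — or $b<c$; assume henceforth that $b<c$.

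Fix an entry $i$ of the second layer, so $1\le i\le b$, and follow the path of $p$ starting at $i$. As in Theorem~\ref{aisb+c}, as long as the path stays among the first and third layers, each pair of consecutive steps sends the current small value $x$ to the large value $n+1-x$ on the third layer and then to the small value $b+x$ on the first layer, so after $2j$ steps the path sits at the value $jb+i$. The step leaving $jb+i$ along the first layer is legal while $jb+i\le a=b+c-1$, and the step that brought the path into $jb+i$ from the third layer was legal while $(j-1)b+i\le c$. Since $a$ now exceeds $c$ by only $b-1$ (rather than by $b$, as in Theorem~\ref{aisb+c}), a short computation shows that the path first leaves the first and third layers at the unique index $j$ for which $jb+i$ lands in the window $(c,\,c+b]$; at that moment $p$ carries the path onto a second-layer position and then onto a second-layer entry $i'$ with
\[
i'\equiv i-c \pmod{b}.
\]
There are two subcases: if $jb+i=c+b=a+1$, the path goes through position $a+1$ and $i'=b$; if $c<jb+i\le a$, it goes through a position in $[a+2,a+b]$ and $i'=jb+i-c$. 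In both the displayed congruence holds.

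Exactly as in Theorem~\ref{aisb+c}, between two consecutive visits of the path to the second layer it runs through a complete residue class modulo $b$ of first-layer positions and a complete residue class modulo $b$ of third-layer positions, along with a single second-layer position. Hence $p$ is cyclic as soon as the path visits all $b$ entries of the second layer. But by the previous paragraph the path visits these entries in the order obtained by repeatedly subtracting $c$ modulo $b$, and since $\gcd(b,c)=1$ this visits all of $\{1,\dots,b\}$ before returning to $i$. Therefore $\{1,\dots,b\}$ lies on a single cycle of $p$, hence so does all of $[n]$, and $(a,b,c)$ is a good triple.

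The step I expect to be the main obstacle is the orbit bookkeeping in the second paragraph: one must check that the path genuinely returns to the second layer (rather than running on forever or returning through a position off the second layer), identify the exact return index $j$, and verify that the residue-class sweeps on the first and third layers are complete rather than merely partial. The small change in the layer-length asymmetry — here $a-c=b-1$ rather than the value $b$ occurring in Theorem~\ref{aisb+c} — is exactly what produces the second subcase and must be tracked with care; the rest is routine arithmetic with the linear formulas in~(\ref{rules}).
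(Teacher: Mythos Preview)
Your argument is correct and follows essentially the same strategy as the paper's proof: track the excursions of the path between second-layer entries, identify the return map on second-layer entries as a shift by a fixed amount modulo $b$, and invoke coprimality. The only cosmetic difference is that you phrase the shift as $i'\equiv i-c\pmod b$ whereas the paper writes $a=kb+r$ and gets $i'\equiv i-(r+1)\pmod b$; since $c\equiv r+1\pmod b$ these are the same, and your window $(c,c+b]$ packaging of the two subcases (the paper's ``exceptional'' entry $r+1$ versus all other $i$) is equivalent to the paper's case split $i\le r$ versus $i>r$.
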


\begin{proof}
  Let $a=kb+r$, with $0\leq r\leq b-1$,  then $c=(k-1)b+r+1$, and
$n=a+b+c=2a+1=2kb+2r+1$.

The proof is similar to that of Theorem \ref{aisb+c}, with one significant difference. This time, $c=a-b+1$, so there is
exactly one entry on the second layer of $p$, namely the entry $r+1$, so that if we start walking the along the path of
$p$ at $r+1$, then we will visit the last layer as many times as the first layer before returning to the second layer. (In the situation of Theorem  \ref{aisb+c}, there was no such entry.) Otherwise, just as in the proof of Theorem \ref{aisb+c},
it suffices to show that $p$ contains a cycle that contains all entries of the second layer.

So let us start walking at this exceptional entry $r+1$. Our walk takes us to position $r+1$ that contains the entry
$p_{r+1}=n-r$, then position $n-r$, that contains entry $p_{n-r}=b+1-r$, and so on, through positions $r+1$, $b+r+1$,
$2b+r+1$, and so on on the first layer, and positions $n-r$, $n-r-b$, and so on on the last layer, eventually reaching the
leftmost position of the last layer, position $n-r-(k-1)b=a+b+1$, containing the entry $p_{a+b+1}=n-a=a+1$. We will
then reach the second layer in the next step, at its leftmost position, at $p_{a+1}=b$.

Other than the exceptional entry $r+1$, all entries of the second layer (including $b$) will behave identically. That is, from $i$,  the walk of $p$ goes to $p_i=n+1-i$,
then to $p_{n+1-i}=b+i$, then to $p_{b+i}=n+1-b-i$, and so on. In the first layer, $p$ will visit positions $i$, $b+i$, $2b+i$, and
so on, while on the last layer, $p$ will visit positions $n$, $n-b$, $n-2b$, and so on.  If $i\leq r$, then as $p$ arrives at position $kb+i$
on the first layer, it finds the entry $p_{kb+i}=n+1-kb-i=a+r+2-i$ there, and then it goes to the {\em second} layer, to the entry
$p_{a+r+2-i}=b+i-r-1$. If $i>r$, then as $p$ arrives at position $(k-1)b+i$ on the first layer, it finds the entry $p_{(k-1)b+i}=
a+b+r+2-i$ there, and then it goes to the {\em second} layer, to the entry $p_{a+b+r+2-i}=i-r-1$. So in all cases when
$i\neq r+1$, the next visit on the second layer after visiting entry $i$ is at the unique entry that is congruent to
$i-r-1$ modulo $b$.  In other words, each visit of the second layer occurs $r+1$ spots to the right of the last one, modulo $b$.

Our proof is now complete noting that $b$ is relatively prime to $r+1$. Indeed, if $d>1$ divides both $b$ and $r+1$, then
it also divides $c=(k-1)b+r+1$, contradicting the assumption that $b$ and $c$ are relative primes.
% Note that $d=2$ is not possible in this case, since
%in this case, that would imply that $a=b+c-1$ is odd, contradicting again Lemma \ref{cdivisors}.
\end{proof}

\begin{example} {\em Let $a=6$, let $b=3$, and let $c=4$. Then $n=13$, and }
\[p=  13 \ 12 \ 11 \ 10 \ 9 \ 8 \ 3 \ 2 \ 1 \ 7 \ 6 \ 5 \ 4 .\]
{\em As $a=2b$, we have $r=0$, and so we start at $r+1=1$. Then $p$ goes from 1 to 13 to 4 to 10 to 7 to $b=3$ to 11 to
6 to 8 to $b-(r+1)=2$ to 12 to 5 to 9 to $b-2(r+1)= 1$. }
\end{example}

The case when $a=b+c-2$ is a little bit more cumbersome. Therefore, we need  two more negative results before announcing our enumeration formulas. For these two propositions, we drop the assumption that $b\leq c$.

\begin{proposition} \label{divby4} Let $n=4k$. If $(a,b,c)$ is a good triple, then $a=2k=n/2$.
\end{proposition}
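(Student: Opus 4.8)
The plan is to combine the near-rigidity of the layer sizes from Lemma~\ref{layerlengths} with a sign-of-a-permutation (parity of inversions) argument. By Proposition~\ref{conjugates} the triple $(a,b,c)$ is good if and only if $(a,c,b)$ is, and neither operation changes $a$, so we may assume without loss of generality that $b\le c$. Then Lemma~\ref{layerlengths}, applied with $n=4k$ even, forces $a=2k$ or $a=2k-1$, and it remains only to eliminate the possibility $a=2k-1$.

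So suppose $a=2k-1$. Then $b+c=n-a=2k+1$ is odd, so exactly one of $b,c$ is even and the other is odd. On the other hand, $n=4k$ is even, so an $n$-cycle is a product of $n-1$ transpositions and hence an odd permutation; therefore, if $p$ is cyclic, the number $I_{a,b,c}$ of its inversions given in~(\ref{inversions}) must be odd. The heart of the argument is to show that, under these hypotheses, $I_{a,b,c}$ is in fact even, which yields the desired contradiction.

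For the computation I would work modulo $2$, using the identity ${b\choose 2}+{c\choose 2}={b+c\choose 2}-bc$ and the elementary fact that ${x\choose 2}$ is even precisely when $x\equiv 0$ or $1\pmod 4$. Since $b+c=2k+1$, the product $bc$ is even, so ${b\choose 2}+{c\choose 2}\equiv{2k+1\choose 2}\equiv k\pmod 2$; likewise ${a\choose 2}={2k-1\choose 2}\equiv k-1\pmod 2$; and $a(b+c)=(2k-1)(2k+1)=4k^{2}-1$ is odd. Adding the four terms of~(\ref{inversions}) gives $I_{a,b,c}\equiv (k-1)+k+1\equiv 2k\equiv 0\pmod 2$, so $p$ is an even permutation and cannot be an $n$-cycle. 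This contradiction shows $a=2k=n/2$.

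There is no real obstacle here once Lemma~\ref{layerlengths} is available; the only point requiring a little care is the modular bookkeeping of the three binomial coefficients ${a\choose 2},{b\choose 2},{c\choose 2}$, which is why rewriting ${b\choose 2}+{c\choose 2}$ via ${b+c\choose 2}-bc$ (so that the hypothesis $b+c=2k+1$ can be used directly) is the cleanest route.
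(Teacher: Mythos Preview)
Your proof is correct and follows the same strategy as the paper: reduce via Lemma~\ref{layerlengths} to ruling out $a=2k-1$, then derive a contradiction by showing that the inversion count~(\ref{inversions}) is even while an $n$-cycle with $n$ even must be odd. Your parity computation is in fact a bit slicker than the paper's, since the identity ${b\choose 2}+{c\choose 2}={b+c\choose 2}-bc$ lets you plug in $b+c=2k+1$ directly and avoid the paper's case split on $b\pmod 4$.
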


\begin{proof} All we need to show is that it is not possible to have $a=2k-1$, and $b+c=2k+1$.  Let us assume that
that is the case; in particular, that both $a$ and $b+c$ are odd, and therefore, exactly one of $b$ and $c$ is even. As we no longer assume that $b\leq c$, we can
assume without loss of generality that $b$ is even and $c$ is odd.  The number $I_{a,b,c}$ of inversions of $p$ is given in (\ref{inversions}). It follows from our assumption that $a(b+c)$ is odd. As $p$ is a permutation of even length, if it is cyclic, then it has to have an odd number of inversions. Therefore, the sum $i_{a,b,c}={a\choose 2} + {b\choose 2} +{c\choose 2}$
has to be even. Recall that we can assume that $b$ is even and $c$ is odd. There are the following two cases.
\begin{enumerate}
\item If $b=4\ell$, then $a\equiv c-2$ modulo 4, so ${a\choose 2}+{c\choose 2}$ is odd, while ${b\choose 2}$ is even.
So   $i_{a,b,c}$ is odd, and therefore, $I_{a,b,c}$ is even.
\item If $b=4\ell+2$, then $a\equiv c$ modulo 4, so ${a\choose 2}+{c\choose 2}$ is even, while ${b\choose 2}$ is odd.
So again,   $i_{a,b,c}$ is odd, and therefore, $I_{a,b,c}$ is even.
\end{enumerate}
So $p$ cannot be cyclic if $a=2k-1$ and $b=2k+1$, proving our claim.
\end{proof}

\begin{proposition} \label{4k+2} Let $n=4k+2>2$, and let $a=2k$. Then $b$ and $c$ must both be even. \end{proposition}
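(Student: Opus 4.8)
The plan is to copy, almost verbatim, the parity‑of‑inversions strategy that already worked in Proposition~\ref{divby4} and in the earlier proposition handling the case $a=b+c$. First I would make the trivial reduction: since $b+c=n-a=2k+2$ is even, $b$ and $c$ have the same parity, so it suffices to rule out the case that $b$ and $c$ are both odd. So assume, for contradiction, that $(a,b,c)$ is good and that $b$ and $c$ are both odd.

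Next I would bring in the sign constraint. Because $n=4k+2$ is even, an $n$-cycle is an odd permutation, so if $p$ is cyclic then its number of inversions $I_{a,b,c}$ must be odd. Plugging $a=2k$ and $b+c=2k+2$ into formula~(\ref{inversions}), $I_{a,b,c}=\binom{a}{2}+\binom{b}{2}+\binom{c}{2}+a(b+c)$, and the last term $a(b+c)=2k(2k+2)$ is even; hence the requirement is equivalent to $i_{a,b,c}:=\binom{a}{2}+\binom{b}{2}+\binom{c}{2}$ being odd.

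The core of the argument is then a short parity computation, using the standard fact (already used in the paper) that $\binom{m}{2}$ is odd exactly when $m\equiv 2$ or $3\pmod 4$. From $a=2k$ one reads off $\binom{a}{2}\equiv k\pmod 2$. For the remaining two terms I would split on the parity of $k$: if $k$ is even then $b+c=2k+2\equiv 2\pmod 4$, which (given $b,c$ both odd) forces $b\equiv c\pmod 4$, so $\binom{b}{2}+\binom{c}{2}$ is even; if $k$ is odd then $b+c\equiv 0\pmod 4$, which forces one of $b,c$ to be $\equiv 1$ and the other $\equiv 3\pmod 4$, so $\binom{b}{2}+\binom{c}{2}$ is odd. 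In both cases $\binom{b}{2}+\binom{c}{2}\equiv k\pmod 2$, hence $i_{a,b,c}\equiv k+k\equiv 0\pmod 2$, so $I_{a,b,c}$ is even. This contradicts the sign constraint, so $b$ and $c$ cannot both be odd, and therefore both are even.

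The only point that needs any care — and the natural candidate for the ``hard step,'' although it is really just bookkeeping — is keeping the residues modulo $4$ straight in the case split on the parity of $k$ and correctly reading off the parities of the three binomial coefficients. There is no genuine combinatorial obstacle: the cycle structure of $p$ enters the argument only through the global sign, exactly as in Proposition~\ref{divby4}, so once the parity computation is organized the result follows.
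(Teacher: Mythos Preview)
Your proof is correct and is essentially identical to the paper's own argument: the paper also reduces to showing $i_{a,b,c}=\binom{a}{2}+\binom{b}{2}+\binom{c}{2}$ is even via a two-case parity analysis, splitting on whether $a$ is $0$ or $2$ modulo $4$ (equivalently, on the parity of $k$, exactly as you do). The only cosmetic difference is that you package the computation as $\binom{a}{2}\equiv k$ and $\binom{b}{2}+\binom{c}{2}\equiv k\pmod 2$, which is a slightly slicker bookkeeping of the same case split.
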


\begin{proof} Let us assume the contrary, that is, that $b$ and $c$ are both odd (they must be of the same parity, since
$b+c=n-a=2k+2$).

If $p$ is cyclic, then it has an odd number of inversions. As $a(b+c)$ is even, that means that $i_{a,b,c}={a\choose 2} + {b\choose 2} +{c\choose 2}$ must be odd. There are again two cases.
\begin{enumerate}
\item If $a$ is divisible by 4, then $b+c$ is not, so, given that $b$ and $c$ are both odd, $b\equiv c$ modulo 4,
so ${b\choose 2} +{c\choose 2}$ is even, and so is ${a\choose 2}$, implying that $i_{a,b,c}$ is even.
\item If $a=4\ell +2$, then $b+c$ is divisible by four, so $b\equiv c-2$ modulo 4, so ${b\choose 2} +{c\choose 2}$ is odd,
and so is ${a\choose 2}$,  implying again that $i_{a,b,c}$ is even.
\end{enumerate}
So if $b$ and $c$ are odd, then $I_{a,b,c}$ is even, and $p$ cannot be cyclic.
\end{proof}

\begin{theorem} \label{diff2} Let $(a,b,c)$ be a triple of positive integers satisfying $a=2K$, with $a\geq b$, and 
$a\geq c$,
so that $b+c=2K+2$, and $b$ and $c$ are both even, and have no common divisor larger than 2. Then $(a,b,c)$ is a good triple.
\end{theorem}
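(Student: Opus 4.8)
The plan is to follow the same walk-the-path strategy used in the proofs of Theorems~\ref{aisb+c} and~\ref{diff1}, but now accounting for the fact that $b$ and $c$ are only "relatively prime up to a factor of $2$." Write $a = 2K$, $b + c = 2K + 2$, and let $d = \gcd(b,c) \in \{1,2\}$; by hypothesis we may assume $d = 2$ (if $d=1$ the argument will be a degenerate/easier special case, or one can check that the $d=1$ subcase still has $b+c$ even forcing both $b,c$ even only when... — more carefully, since $b,c$ are both even here $d=2$ always, so there is really only one case). Set $b = 2\beta$, $c = 2\gamma$ with $\gcd(\beta,\gamma)=1$ and $\beta + \gamma = K+1$. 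As in the earlier proofs, write $a = kb + r$ with $0 \le r \le b-1$; since $a = b + c - 2$ we get $c = (k-1)b + r + 2$ (one checks $r+2 \le b$ unless small edge cases intervene, which must be handled separately).

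First I would re-run the layer-rotation analysis: starting at an entry $i$ of the second layer, $p$ alternately visits the first and third layers, traversing a fixed residue class modulo $b$ of positions on each, until it runs out of room on the shorter (third) layer and drops back to the second layer. Because $c = a - b + 2$, there are now \emph{two} exceptional entries of the second layer (rather than zero as in Theorem~\ref{aisb+c} or one as in Theorem~\ref{diff1}) from which the walk reaches the third layer as many times as the first before returning; these are the entries congruent to $r+1$ and $r+2$ (roughly) modulo $b$. For every \emph{non}-exceptional entry $i$ of the second layer, I would compute, exactly as in the previous proofs, that the next second-layer entry visited is the one congruent to $i - (r+2) \pmod b$; for the two exceptional entries one computes the shift separately and checks it is consistent with a shift of $-(r+2)$ modulo $b$ as well (this is where the "$+2$" forces the subtlety). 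Hence each return to the second layer advances by a fixed step $s \equiv -(r+2) \equiv -(r+2) \pmod b$.

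The key point is then divisibility: $p$ is cyclic if and only if, starting from any second-layer entry, the walk visits all $b$ of them before returning, i.e.\ iff $\gcd(s, b) = 1$. Here $\gcd(r+2, b)$ need \emph{not} be $1$ — indeed since $b$ is even and $r+2 = a - (k-1)b + \dots$, one finds $r$ is even, so $r+2$ is even and $\gcd(r+2,b)$ is at least $2$. So the walk on the second layer does \emph{not} visit all $b$ entries; instead it visits $b/\gcd(r+2,b)$ of them and returns. This is where I expect the main obstacle to lie, and where the "no common divisor larger than $2$" hypothesis must do real work: one shows $\gcd(r+2, b) = 2$ exactly (using $\gcd(\beta,\gamma) = 1$), so the second-layer walk splits into exactly \emph{two} orbits, the even-indexed and odd-indexed second-layer entries. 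One must then show these two orbits nevertheless merge into a single cycle of $p$ — presumably because the two exceptional entries bridge them, or because between the two halves the intervening first/third layer residue classes interlock. Concretely, I would track a full excursion of $p$: show that the cycle through a fixed starting entry passes through \emph{both} parity classes of the second layer (hence all of it) precisely because of how the two exceptional entries are threaded in, and then invoke the earlier observation (each inter-visit stretch covers a complete residue class of first- and third-layer positions) to conclude the cycle contains every entry of $p$.

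To make the merging rigorous I would likely argue via a parity/inversion count as a sanity check: by Propositions~\ref{divby4} and~\ref{4k+2} we already know the constraints on $a,b,c$, and the inversion formula~(\ref{inversions}) together with $a = 2K$, $b+c = 2K+2$ should give $I_{a,b,c}$ odd, consistent with $p$ being an $n$-cycle of even length $n$; this doesn't prove cyclicity but rules out the "two equal cycles" failure mode. The real proof of merging, though, should be the explicit path computation: exhibit the successor of each of the two exceptional second-layer entries, verify that one exceptional entry is reached \emph{from} the even orbit and lands in the odd orbit (or vice versa), so the two orbits are linked into one. I would close with a worked example in the style of the preceding ones (e.g.\ $a = 6$, $b = 4$, $c = 4$ is excluded since $\gcd = 4$; take $a = 6$, $b = 2$, $c = 6$, or $a = 10$, $b = 4$, $c = 8$ — again $\gcd=4$, so rather $a = 8$, $b = 4$, $c = 6$ with $\gcd(4,6)=2$, $n = 18$) to illustrate how the two second-layer parity classes get stitched together into a single $n$-cycle.
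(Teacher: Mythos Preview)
Your plan is the paper's proof, and your final paragraph has the key mechanism exactly right: the two exceptional second-layer entries are what stitch the even and odd orbits together. There is, however, one internal inconsistency you should resolve. You first write that for the exceptional entries ``one computes the shift separately and checks it is consistent with a shift of $-(r+2)$ modulo $b$ as well,'' and then conclude that \emph{every} return to the second layer advances by the fixed step $-(r+2)$. If that were true, the two parity classes on the second layer really would be disjoint cycles and $p$ could not be an $n$-cycle; no amount of interlocking on the outer layers would repair this, since (as you note) each excursion between second-layer visits sweeps a full residue class on layers one and three and nothing more.

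What actually happens, and what the paper computes directly, is that the exceptional entries do \emph{not} shift by $-(r+2)$: from $r+1$ the next second-layer entry reached is $b$, and from $r+2$ it is $b-1$. Since $r$ is even, the odd entry $r+1$ lands on the even entry $b$, while the even entry $r+2$ lands on the odd entry $b-1$; this parity swap is precisely the bridge you anticipated. The second-layer itinerary is then
\[
r+1 \;\to\; b \;\to\; b-(r+2) \;\to\; \cdots \;\to\; r+2 \;\to\; b-1 \;\to\; b-1-(r+2) \;\to\; \cdots \;\to\; r+1,
\]
with every unlabelled arrow the ordinary $-(r+2)$ step; because $\gcd(r+2,b)=2$, the first block exhausts all even second-layer entries and the second block all odd ones. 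Your suggested example $(a,b,c)=(8,4,6)$, $n=18$, is exactly the one the paper works out.
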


\begin{proof}  Let $a=kb+r$, with $0\leq r\leq b-1$,  then $c=(k-1)b+r+2$, and
$n=a+b+c=2a+2=2kb+2r+2$. Note in particular that the conditions imply that $r$ is an even number,
and that $b$ and $r+2$ have largest common divisor 2.

The proof is similar to that of Theorem \ref{diff1}, with one significant difference. This time, $c=a-b+2$, so there are
exactly two entries on the second layer of $p$, namely the entries  $r+1$ and $r+2$, so that if we start walking the along the path of
$p$ at $r+1$, or at $r+2$, then we will visit the last layer as many times as the first layer before returning to the second layer. (In the situation of Theorem  \ref{aisb+c}, there was no such entry, and in the situation of Theorem \ref{diff1}, there was one such entry.)

So let us start walking at the exceptional entry $r+1$. Our walk takes us to position $r+1$ that contains the entry
$p_{r+1}=n-r$, then to position $n-r$, that contains the entry $p_{n-r}=b+1-r$, and so on, through positions $r+1$, $b+r+1$,
$2b+r+1$, and so on on the first layer, and positions $n-r$, $n-r-b$, and so on on the last layer, eventually reaching the
second-from-the-left position of the last layer, position $n-r-(k-1)b=a+b+2$, containing the entry $p_{a+b+2}=n-a-1=a+1$. We will
then reach the second layer in the next step, at its leftmost position, at $p_{a+1}=b$.

After $r+1$, all entries of the second layer, {\em except for} $r+2$,  will behave identically. That is, from $i$,  the walk of $p$ goes to $p_i=n+1-i$,
then to $p_{n+1-i}=b+i$, then to $p_{b+i}=n+1-b-i$, and so on. In the first layer, $p$ will visit positions $i$, $b+i$, $2b+i$, and
so on, while on the last layer, $p$ will visit positions $n$, $n-b$, $n-2b$, and so on.  If $i\leq r$, then as $p$ arrives at position $kb+i$
on the first layer, it finds the entry $p_{kb+i}=n+1-kb-i=a+r+3-i$ there, and then it goes to the {\em second} layer, to the entry
$p_{a+r+3-i}=b+i-r-2$. If $i>r+2$, then as $p$ arrives at position $(k-1)b+i$ on the first layer, it finds the entry $p_{(k-1)b+i}=
a+b+r+3-i$ there, and then it goes to the {\em second} layer, to the entry $p_{a+b+r+3-i}=i-r-2$. So in all cases when
$i\notin \{r+1,r+2\}$, the next visit on the second layer after visiting entry $i$ is at the unique entry that is congruent to
$i-r-2$ modulo $b$.  In other words, each visit of the second layer occurs $r+2$ spots to the right of the last one, modulo $b$.

Therefore, the visits of $p$ at the second layer will occur in the following order:  $r+1, b, b-(r+2), b-2(r+2), \cdots $,
understood modulo $b$. As $b$ and $r+2$ have largest common divisor 2, the first $b/2$ visits starting with $b$ will all
be at distinct even entries of the second layer, the last one arriving at $r+2$.

The entry $r+2$ is exceptional in the same way as $r+1$ is -- the walk starting there will reach the second layer from the
third layer, not the first. Indeed, our walk takes us to the position $r+2$ that contains the entry
$p_{r+2}=n-r-1$, then to position $n-r-1$, that contains the entry $p_{n-r-1}=b+2-r$, and so on, through positions $r+2$, $b+r+2$,
$2b+r+2$, and so on on the first layer, and positions $n-r-1$, $n-r-b-1$, and so on on the last layer, eventually reaching the
leftmost position of the last layer, position $n-r-1-(k-1)b=a+b+1$, containing the entry $p_{a+b+1}=n-a$. We will
then reach the second layer in the next step, at the entry $p_{n-a}=a+b+1-(n-a)=a+b+1-(b+c)=a-c+1=b-1$.

After this, the remaining entries of the second layer again behave identically, just as we have seen two paragraphs above.
So the next visits on the second layer are at $r+1, b-1, b-1-(r+2), b-1-2(r+2), \ldots $. As $b$ and $r+2$ have largest
common divisor 2, the smallest solution of the equation $b-1 -j(r+2) \equiv 1$, or, equivalently, $-j(r+2)\equiv 0$ modulo $b$ is $j=b/2$. So the first $b/2$ visits will be at distinct odd entries of the second layer, and the next one will be at the entry 1, closing the cycle of $p$.
\end{proof}

\begin{example}
Let $a=8$, $b=4$, and $c=6$. Then $n=18$, $r=0$, and
\[p = 18 \ 17 \ 16 \ 15 \ 14 \ 13 \ 12 \ 11 \ 4 \ 3 \ 2 \ 1 \ 10 \ 9 \ 8 \ 7 \ 6 \ 5  .\]
Starting at $r+1=1$, the permutation $p$ maps 1 to 18 to 5 to 14 to 9 to $b=4$ to 15 to 8 to 11 to $r+2=2$ to  17 to 6 to 13
to 10 to $b-1=3$ to 16 to 7 to 12 to 1.
\end{example}

Now that we have completely characterized good triples, and therefore, cyclic permutations that avoid both 123 and 231,
we are ready to announce our enumeration formulas.

\begin{theorem} \label{main} Let $\phi$ be the Euler totient function. That is, for a positive integer $z$, let $\phi(z)$ be
 the number of  positive integers less than $z$ that are relatively prime to $z$. Then, for $n>2$, 
the following enumeration formulas hold.
\begin{equation}  C_n(123,231)= \left\{ \begin{array}{l@{\ }l}
\phi(2k) =\phi(n/2) \hbox{  if $n=4k$},\\
\phi(k+1)+\phi(2k+1) =\phi\left(\frac{n+2}{4}\right) + \phi\left(\frac{n}{2} \right) \hbox{ if $n=4k+2$,}\\
\phi(m) = \phi((n+1)/2) \hbox{  if $n=2m-1$.  }
\end{array}\right.
\end{equation}
\end{theorem}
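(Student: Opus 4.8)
The plan is to assemble the three formulas from the structural results of the previous subsections, treating the residue of $n$ modulo $4$ separately. The first step is a reduction to counting good triples. Every triple $(a,b,c)$ of positive integers with $a+b+c=n$ gives a permutation that avoids both $123$ and $231$ and is never an involution, since its first entry is $n$ while its last entry is $b+1\ge 2$; conversely, by Lemma~\ref{layers}, every $\{123,231\}$-avoiding permutation of length $n$ that is not an involution is the permutation of exactly one such triple. Since a cyclic permutation of length $n>2$ has a cycle of length greater than $2$ and so is not an involution, for $n>2$ we get $C_n(123,231)=\#\{\text{good triples of length }n\}$. Because Lemma~\ref{layerlengths} and the positive theorems are phrased under $b\le c$, while a good triple need not satisfy this, I will repeatedly use Proposition~\ref{conjugates} to pass between $(a,b,c)$ and $(a,c,b)$; in particular, applying Lemma~\ref{layerlengths} to the reordered triple shows that every good triple of length $n$ has $a=(n-1)/2$ if $n$ is odd, and $a=n/2$ or $a=n/2-1$ if $n$ is even.

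Next I would handle the two cases in which $a$ is forced to a single value. If $n=2m-1$ is odd, then $a=m-1$ and $b+c=m$, so $a=b+c-1$; as $a$ and $b+c$ then have opposite parities, $b$ and $c$ are not both even, hence are relatively prime by Lemma~\ref{cdivisors}. Conversely, Theorem~\ref{diff1} (applied to the reordering with $b\le c$) shows each such $(m-1,b,c)$ with $\gcd(b,c)=1$ is good. Since $\gcd(b,c)=\gcd(b,m)$, the number of good triples is $\#\{1\le b\le m-1:\gcd(b,m)=1\}=\phi(m)=\phi\bigl((n+1)/2\bigr)$. If $n=4k$, then Proposition~\ref{divby4} forces $a=2k=n/2$, hence $a=b+c$; the proposition stating that a good triple with $a=b+c$ has $b,c$ relatively prime, combined with Theorem~\ref{aisb+c} for the converse, identifies the good triples as $\{(2k,b,c):b+c=2k,\ \gcd(b,c)=1\}$, of which there are $\phi(2k)=\phi(n/2)$.

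The remaining case $n=4k+2>2$ is the one in which both values of $a$ occur, producing the sum of two totients. Here $a\in\{2k+1,2k\}$. If $a=2k+1$, then $b+c=2k+1=a$ is odd, so $b$ and $c$ have opposite parities and hence are relatively prime by Lemma~\ref{cdivisors}; Theorem~\ref{aisb+c} gives the converse, so these good triples number $\#\{1\le b\le 2k:\gcd(b,2k+1)=1\}=\phi(2k+1)=\phi(n/2)$. If $a=2k$, then Proposition~\ref{4k+2} forces $b$ and $c$ to be even, so by Lemma~\ref{cdivisors} their largest common divisor is $2$ and $a$ is even, as required; writing $b=2b'$ and $c=2c'$ gives $b'+c'=k+1$ and $\gcd(b',c')=1$, and the hypotheses $a\ge b$, $a\ge c$ of Theorem~\ref{diff2} hold automatically because $b,c\ge 2$ and $b+c=a+2$, so every such triple is good. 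These number $\#\{1\le b'\le k:\gcd(b',k+1)=1\}=\phi(k+1)=\phi\bigl((n+2)/4\bigr)$. Adding the two subcases yields $C_{4k+2}(123,231)=\phi\bigl((n+2)/4\bigr)+\phi(n/2)$, which together with the previous paragraph establishes all three formulas.

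All the substance lies in the earlier lemmas, so I expect no single hard computation here. The step requiring the most care is verifying that, in each case, the necessary conditions gathered from Lemma~\ref{layerlengths}, Lemma~\ref{cdivisors}, Propositions~\ref{divby4} and~\ref{4k+2}, and the inversion-parity propositions pin down \emph{exactly} the families covered by the positive results (Theorems~\ref{aisb+c}, \ref{diff1}, \ref{diff2}), with no triple left unclassified and none double-counted. A secondary subtlety is that, since several results assume $b\le c$, one must consistently invoke Proposition~\ref{conjugates} when counting the \emph{ordered} triples $(a,b,c)$, and must recognize each reindexed count --- such as $\#\{1\le b'\le k:\gcd(b',k+1)=1\}$ --- as a value of $\phi$ rather than half of one.
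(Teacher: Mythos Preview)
Your proposal is correct and follows essentially the same approach as the paper's own proof: split on the residue of $n$ modulo $4$, use Lemma~\ref{layerlengths} (together with Propositions~\ref{divby4} and~\ref{4k+2}) to pin down $a$ and the parity of $b,c$, invoke Lemma~\ref{cdivisors} for the coprimality necessity, and then match each case to the appropriate positive result (Theorems~\ref{aisb+c}, \ref{diff1}, \ref{diff2}) before reading off the totient. You are in fact slightly more careful than the paper in two respects: you explicitly justify the reduction $C_n(123,231)=\#\{\text{good triples}\}$ for $n>2$, and you explicitly flag the use of Proposition~\ref{conjugates} to drop the $b\le c$ hypothesis when counting ordered triples.
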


Note that  the formula of Theorem \ref{main} does not hold for $n=2$, because in that case, $k=0$, so $k+1=2k+1$, 
resulting in an overcount and leading to the incorrect value of 2, instead of the correct value of $C_2(123,231)=1$.
The sequence  $C_n(123,231)$ is listed in OEIS \cite{oeis} as sequence A309563.

\begin{proof}
If $n=4k$, and $(a,b,c)$ is a good triple, then Proposition \ref{divby4} shows that  $a=2k$.  Let $b<a$ be relatively prime
to $a$. Then, and only then,  $b$ is also relatively prime to $c=a-b$. Theorem \ref{aisb+c} then shows
that $(a,b,c)$ is a good triple. Therefore, there are $\phi(2k)$ choices for $b$, and hence, for a good triple
$(a,b,c)$.

If $n=4k+2$, then Lemma \ref{layerlengths} shows that either $a=2k+1$ or $a=2k-1$ holds for all good triples $(a,b,c)$. In the first case,   $b+c=2k+1$.    Let $b<a$ be relatively prime
to $a$. Then, and only then,  $b$ is also relatively prime to $c=a-b$.  By Theorem \ref{aisb+c}, all such choices of $b$ will lead to a valid triple.
So this case contributes $\phi(2k+1)$ permutations to the total count.
In the second case, $b+c=2k+2$. Proposition \ref{4k+2} shows that $b$ and $c$ must both be even, implying that
 $b/2 \in [k]$, since $c\neq 0$.  Lemma \ref{cdivisors} shows that $b/2$ and $c/2$ must be relatively prime to each other,
and that is equivalent to saying that $b/2$ is relatively prime to $(b+c)/2=k+1$. On the other hand,  Theorem \ref{diff2} shows that if $b/2$ and $c/2$  are relatively prime to each other, then the triple $(a,b,c)$ is good. So this second case contributes $\phi(k+1)$  permutations to the total count.

Finally, if  $n=2m-1$, then by Lemma \ref{layerlengths} we must have $a=m-1$, $b+c=m$, and $b$ must be any positive integer less than $m$ that is relatively prime to
$b+c=m$. By Theorem \ref{diff1}, each such choice of $b$ will lead to a valid triple. This completes the proof.
\end{proof}

Recall that after the proof of Lemma \ref{layerlengths}, we pointed out that the crude upper bound $C_n(123,231)\leq n$
holds. Now, using Theorem \ref{main} and the trivial inequality $\phi(z)\leq z-1$, we can sharpen that bound to
$C_{n}(123,231) \leq \frac{3n-6}{4}$, for $n\geq 4$. This upper bound is attained for integers $n=4k+2$ if and only if both
$k+1$ and $2k+1$ are primes. For instance, $k=6$, yielding $n=26$, satisfies this requirement.

\section{The pair $(123,132)$}

It turns out that the enumeration formula for the pair $(123,132)$ is significantly simpler.

\begin{theorem}
For all $n\geq 3$, the equality \[C_n(123,132)=2^{\lfloor (n-1) /2 \rfloor }\] holds.
\end{theorem}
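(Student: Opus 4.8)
The plan is to first determine the structure of a permutation $p$ of length $n$ that avoids both $123$ and $132$. The condition of avoiding both of these patterns is very restrictive: if $p_i < p_j$ for some $i < j$, then there can be no entry to the right of position $j$ that is either larger than $p_i$ (which would make a $123$ with $p_i, p_j$) — wait, more carefully, the classical fact is that a permutation avoids $\{123,132\}$ if and only if every entry has at most one entry to its right that is larger than it; equivalently, each left-to-right minimum-type structure holds. In fact the standard description is: $p$ avoids $\{123,132\}$ iff for each $i$, the entries to the right of position $i$ that exceed $p_i$ are... let me instead recall the clean statement: $S_n(123,132) = 2^{n-1}$, and such permutations are exactly those where $p_i > p_{i+2}$ for all $i$ (each entry is larger than the entry two positions later), or can be built by a simple recursive insertion of $n$ either in the first or second position. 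So the first step is to pin down this structure and then impose the cyclicity condition.

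Concretely, I would argue that a $\{123,132\}$-avoiding permutation $p$ has $p_1 = n$ or $p_2 = n$. If $p_1 = n$, then $p_2 p_3 \cdots p_n$ is a $\{123,132\}$-avoiding permutation of $\{1,\dots,n-1\}$, and $p$ is cyclic iff that smaller permutation, viewed as a cycle on $\{1,\dots,n-1\}$ together with the map $n \mapsto$ (first entry) $\mapsto \cdots$, closes up — but since $p_1 = n$ just prepends $n$ to the front, $p$ is cyclic iff the restriction is. Actually the cleaner move: if $p_1 = n$ then $p$ fixes nothing obviously, but I need $n$ to be on the cycle; since $p(n) = p_n$ and $p^{-1}(n) = 1$, inserting $n$ at the front of a cyclic permutation $p'$ on $[n-1]$ always yields a cyclic permutation on $[n]$ — because $p'$ already had $1 \mapsto p'_1$, and now $n$ splices into that cycle between $1$ and $p'_1$. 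If instead $p_2 = n$, then $p_1$ must be chosen so that no $123$ or $132$ forms, forcing $p_1 = n-1$ (any smaller value for $p_1$ with $n$ right after it and then the rest of the entries would create a forbidden pattern using $p_1$, $n$, and a suitable third entry), and then $p_3 p_4 \cdots p_n$ is a $\{123,132\}$-avoiding permutation of $[n-2]$, with $p$ built from it by prepending $(n-1)\, n$. I would then check when this second construction yields a cyclic permutation: prepending the block $(n-1)\,n$ corresponds to $n-1 \mapsto n \mapsto p_3 = $ (old first entry of $p'$), and $? \mapsto n-1$; one needs to verify this always (or never, depending on parity) closes into a single cycle, and this is where the $\lfloor (n-1)/2\rfloor$ exponent will come from.

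The key step is therefore to set up a recursion: let $C_n = C_n(123,132)$. The "prepend $n$" operation takes cyclic permutations on $[n-1]$ to cyclic permutations on $[n]$, always. The "prepend $(n-1)\,n$" operation takes $\{123,132\}$-avoiding permutations on $[n-2]$ (cyclic or not — I need to track this) to $\{123,132\}$-avoiding permutations on $[n]$, and I expect it to produce a cyclic permutation precisely when the underlying permutation on $[n-2]$ has the right cycle count (I anticipate: prepending a transposition-like $2$-block flips "number of cycles" by $\pm 1$ in a controlled way). The main obstacle — and the heart of the proof — will be correctly analyzing the cycle structure under the "$p_2 = n$" operation: I must show that among the $C_{n-2}$-many $\{123,132\}$-avoiding permutations of $[n-2]$, exactly the right number give rise to cyclic permutations of $[n]$ after prepending $(n-1)\,n$, so that the total satisfies $C_n = C_{n-1} + (\text{something involving } C_{n-2} \text{ or } 2^{n-2})$ that solves to $2^{\lfloor(n-1)/2\rfloor}$.

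Once the recursion is established, the closing step is routine: verify the base cases $C_3 = 2$ and $C_4 = 2$ (or $C_3, C_4, C_5$ as needed) by hand, confirm the recursion, and conclude $C_n = 2^{\lfloor (n-1)/2 \rfloor}$ by induction. I would double-check the formula's parity behavior — it doubles every time $n$ increases by $2$ and stays flat on the odd-to-even step $2m-1 \to 2m$ — against the two construction operations to make sure the bookkeeping of which operations preserve cyclicity matches: exactly one "free" doubling should occur per two steps, consistent with $C_{2m} = C_{2m-1}$ and $C_{2m+1} = 2 C_{2m-1}$.
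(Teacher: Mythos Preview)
Your proposal rests on a false structural claim. You assert that a $\{123,132\}$-avoiding permutation has $n$ in position $1$ or position $2$, and you build your recursion on this dichotomy. But this is not the correct description of $\{123,132\}$-avoiders: the permutation $43521$ avoids both patterns, is cyclic, and has its largest entry in position~$3$. The correct fact (and the one the paper uses) is that $p_1\in\{n-1,n\}$, i.e.\ the \emph{first entry} is one of the two largest values---not that the largest value occupies one of the first two positions. These two statements sound alike but lead to completely different decompositions.

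There is a second error even within the case you do analyze. You claim that prepending $n$ in one-line notation to a cyclic permutation of $[n-1]$ always produces a cyclic permutation of $[n]$, because ``$n$ splices into the cycle between $1$ and $p'_1$''. But prepending in one-line notation shifts every position by one; it is \emph{not} a cycle-splicing operation. Concretely, $p'=231$ is cyclic on $[3]$, yet $4231$ has cycle decomposition $(1\,4)(2)(3)$. So the recursion $C_n = C_{n-1} + (\cdots)$ that you are aiming for cannot be set up this way.

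The paper's argument avoids recursion altogether. It fills positions from the left: $p_1$ is one of the two largest values, then $p_2$ is one of the two largest remaining values, and so on. In parallel it tracks where the small entries $1,2,\ldots$ land (each in the last or next-to-last remaining position). The key point is that the choice of $p_i$ \emph{forces} the position of the entry $i$: setting $S_i=\{p_1,\ldots,p_i\}$ and $T_i=\{\text{positions of }1,\ldots,i\}$, both are $i$-element subsets of $\{n-i,\ldots,n\}$, and one checks that making the same choice for $T_i$ as for $S_i$ would either give $S_i=T_i$ (so $p|_{S_i}$ is a proper union of cycles) or create the $2$-cycle $(i\ \, n{-}i)$. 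Hence each of the first $\lfloor (n-1)/2\rfloor$ steps contributes exactly one free binary choice, and the remaining entries are then determined.
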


\begin{proof}
First note that in any permutation that avoids both 123 and 132, (so not only in the cyclic permutations avoiding those patterns), the entry 1 must be in the last or next-to-last position. Once the place of the entry 1 is chosen,  the entry 2 has to be in the last or next-to-last available position, and so on.

 Let $p=p_1p_2\cdots p_n$ be a cyclic permutation that avoids both 132 and 123. Then $p_1=n$ or $p_1=
n-1$, otherwise $p_1$ is eventually followed by two larger entries, forcing $p_1$ to be the first entry of a 123-pattern
or a 132-pattern.  If $p_1=n$, then $p_n\neq 1$, since that would mean that $(1n)$ is a 2-cycle of $p$, and if
$p_1=n-1$, then $p_{n-1}\neq 1$, since then $(1(n-1))$ would be a 2-cycle in $p$.  So, in both cases, exactly one of the two  positions that were originally eligible to contain the entry 1  is available. Once we selected the position of 1, we return to the front of the permutation,
and select the value of $p_2$. We have two choices for $p_2$, namely the two largest remaining entries. The choice that we make for $p_2$ will eliminate one of these two positions for the entry 2.

We must show that this process keeps going on like this, that is,  if $i\leq \lfloor (n-1) /2 \rfloor $, then we will always have
two choices for the entry $p_i$, while if $i<\lfloor (n-1) /2 \rfloor $, we will have exactly one choice. This will prove the
statement of the theorem.

As we fill the positions of $p$, we form two sets. Let $S_i=\{p_1,p_2,\cdots ,p_i\}$, and let
$T_i$ be  set of positions that the entries $1,2,\cdots ,i$ occupy. Note that if $1\leq i\leq  \lfloor (n-1) /2 \rfloor $,
then $S_i \neq T_i$. Indeed, $S_i =T_i$ would imply that the restriction of $p$ to $S_i$ is a bijection from $S_i$ onto
itself, that is, it is a permutation of the set $S_i$, and therefore, a union of its cycles. That would contradict
the condition that the longer permutation $p$ itself is a cycle.

It is easy to see that both $S_i$ and $T_i$ are $i$-element subsets of the set $\{n-i,n-i+1, \cdots ,n\}$. (In the first
two paragraphs of this proof, we show this for $S_1$ and $T_1$, and the cases of general $i$ are very similar to these.)
 In other words, they both contain at most one gap, either inside, or at the end.

We will now count the ways in which the sets $S_1,S_2,\cdots $ and $T_{1},T_{2}, \cdots $ can be built up
from $S_0=T_0=\emptyset$.

When we extend $S_{i-1}$ to $S_i$, we do one of two things.  Either we fill the gap in $S_{i-1}$, turning it into
the interval $[n-i+1,\cdots ,n]$, or we add a new, minimal element $n-i$ to $S_{i-1}$ to form $S_i$.

When we extend $T_{i-1}$ to $T_i$, we could think that we have these same two choices. However, we cannot make the same choice as we made for $S_i$. Indeed, if $S_i$ and $T_i$ are both equal to the interval $[n-i+1,\cdots ,n]$, then
$S_i=T_i$, which we have already excluded. If $S_i$ and $T_i$ were both obtained by adding the new element $n-i$,
that means that $p_i=n-i$ and $p_{n-i}=i$, so $(i \ (n-i) )$ is a 2-cycle, which is a contradiction. Therefore, we have
only one choice when we extend $T_{i-1}$ to $T_i$.

If $n$ is odd, then we make $2^{(n-1)/2}$ choices in this way, building the sequence $S_1,S_2,\cdots ,S_{(n-1)/2}$, and so  selecting the leftmost $(n-1)/2$ entries of $p$, and the
positions of the smallest $(n-1)/2$ entries of $p$. Then we put the last remaining entry in the last remaining position.

If $n=2k$, then we make  $2^{k-1}$ choices in this way, selecting the sequence $S_1,S_2,\cdots, S_{n/2}$, and so
selecting the first $k-1$ entries of $p$, and the
positions of the smallest $k-1$ entries of $p$. This leaves two empty positions, one of them is the $k$th position, and
the other one is somewhere in the second half of $p$. This also leaves two unused entries, one of them is the entry $k$,
and the other one is an entry from the larger half of $p$. As $p(k)\neq k$, our hands are tied, and the proof is complete.
\end{proof}

\section{The remaining pairs} \label{others}

A result of Archer and Elizalde \cite{archer} shows that
\[C_n(132,231)=\frac{1}{2n} \sum_{d|n \atop d=2k+1} \mu(d) 2^{n/d},\]
where $\mu$ is the number theoretical M\"obius function.

Most of the remaining pairs are straightforward to enumerate. This is the content of the next theorem.

\begin{theorem}
The following equalities hold.
\begin{enumerate}
\item For $n\geq 5$, $C_n(123,321)=0$.
\item For $n\geq 3$, $C_n(231,312)=0$.
\item For all positive integers $n$, $C_n(231,321)=1$.
\item For all positive integers $n$, $C_n(132,321)=\phi(n)$.
\end{enumerate}
\end{theorem}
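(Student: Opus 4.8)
Each statement concerns permutations avoiding a pair of length-three patterns, and in every case my plan follows the same two-step routine: first describe the structure of \emph{all} permutations avoiding the given pair — usually by examining the position of the largest entry $n$ — and then ask which of those structures describes a single $n$-cycle. For (1) this is immediate from the Erd\H{o}s--Szekeres theorem: any permutation of length at least $5=(3-1)(3-1)+1$ contains a monotone subsequence of length $3$, hence a $123$ or a $321$ pattern, so for $n\ge 5$ there is no permutation of length $n$ avoiding both $123$ and $321$, cyclic or not, giving $C_n(123,321)=0$.

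For (2), let $p$ avoid $231$ and $312$ and write $n=p_m$. Since $n$ can only play the role of the ``$3$'' in a forbidden pattern, avoiding $231$ forces every entry left of position $m$ to be smaller than every entry right of it, and avoiding $312$ forces the entries right of position $m$ to be decreasing. Hence either $m=n$, so $n$ is a fixed point; or $1<m<n$, in which case the block after $n$ is $n-1,n-2,\dots,m$, so $p(m)=n$ and $p(n)=m$ form the $2$-cycle $(m\ n)$; or $m=1$, in which case $p=n\,(n-1)\cdots 1$ is a product of transpositions. None of these is an $n$-cycle for $n\ge 3$, so $C_n(231,312)=0$ there.

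Parts (3) and (4) use the same idea but require the full recursive description of the avoiders. For (3), putting $n$ at position $j$ and using once more that $n$ can only be the ``$3$'' of a forbidden pattern shows that everything before $n$ lies below everything after $n$ and that the block after $n$ is increasing; thus that block is exactly $j,j+1,\dots,n-1$, while the prefix $p_1\cdots p_{j-1}$ is again a $\{231,321\}$-avoider of length $j-1$. Reading $p$ as a map, the entries $j,j+1,\dots,n$ then close into the cycle $(j\ n\ (n-1)\cdots(j+1))$ by themselves, so $p$ is an $n$-cycle only when $j=1$, that is, $p=n\,1\,2\cdots(n-1)$, which is indeed the $n$-cycle $(1\ n\ (n-1)\cdots 2)$; hence $C_n(231,321)=1$. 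For (4), the analogous analysis for $\{132,321\}$ — large entries before $n$, small entries after, the after-block increasing, and the before-block increasing whenever the after-block is nonempty, since a descent before $n$ together with any entry after it would be a $321$ — forces $p$ to be either $q\,n$ with $n$ a fixed point, or
\[ p=(n-a)(n-a+1)\cdots(n-1)\ n\ 1\,2\cdots b,\qquad a+b=n-1,\ b\ge 1. \]
The key point is that this $p$ is exactly the rotation $i\mapsto i+b \pmod n$ of $[n]$, hence an $n$-cycle precisely when $\gcd(b,n)=1$; as $b$ ranges over $\{1,2,\dots,n-1\}$ this gives $\phi(n)$ cyclic avoiders, so $C_n(132,321)=\phi(n)$ (the cases $n=1,2$ being checked directly).

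The one delicate point throughout is the pattern bookkeeping behind the structure theorems in (3) and (4): one must verify that a ``cross'' subsequence using two entries on one side of $n$ and one on the other never realizes a forbidden pattern, which is what validates both the recursion and the claimed normal forms. The genuinely substantive step — the crux, as I see it — is the observation in (4) that the forced staircase permutation is precisely a cyclic rotation of $[n]$, since this is exactly what converts the enumeration into a count of residues coprime to $n$.
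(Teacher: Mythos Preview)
Your proof is correct and follows essentially the same approach as the paper: Erd\H{o}s--Szekeres for (1), the observation that $\{231,312\}$-avoiders are involutions for (2) (the paper phrases this via the layered structure, you via a direct case analysis on the position of $n$, but the content is the same), the invariant block containing $n$ for (3), and the identification of $\{132,321\}$-avoiders with cyclic rotations $i\mapsto i+b\pmod n$ for (4). The only cosmetic difference is that in (3) you exhibit the closed cycle on $\{j,\dots,n\}$ while the paper notes the complementary fact that $\{1,\dots,j-1\}$ is $p$-invariant.
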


\begin{proof}
\begin{enumerate}
\item The famous Erd\H{o}s-Szekeres theorem shows $S_n(123,321)=0$ if $n\geq 5$. So there are no permutations (cyclic or not) of length five or more avoiding both of those patterns.
\item Permutations that avoid 231 and 312 must start with a decreasing sequence of their $k_1$ smallest entries for some $k_1$,
then continue with a decreasing sequence of their next $k_2$ smallest entries, and so on, like the permutation
$321\ 54 \ 6 \ 987$. (Such permutations are called {\em layered} permutations.) This structure implies that all such
permutations are {\em involutions}, so they cannot be cyclic if $n>2$.
\item In permutations avoiding 231 and 321, the sequence of entries on the right of $n$ must be increasing, and must consist of entries that are larger than the entries on the left of $n$. However, such permutations entries on the left of $n$
are mapped into entries on the left of $n$, which makes it impossible for such a permutation to be cyclic {\em unless} there is nothing on the left of $n$. Therefore, the only cyclic permutation avoiding those patterns is $n12\cdots (n-1)$.
\item If a permutation $p$ of length $n$ avoids both 132 and 321, and does not end in $n$ (which cyclic permutations cannot do),
then it is of the form $ (i+1)\  (i+2) \cdots n \ 1 \ 2 \cdots i$ for some $i\geq 1$. In other words, $p=q^i$, where
$q=23\cdots n1$. Such a permutation is cyclic if and only if $i<n$ is relatively prime to $n$.
\end{enumerate}
\end{proof}

All other pairs of patterns of length three are equivalent to one of those that we have considered, by the trivial symmetries
(taking inverses, or taking reverse complements), except for the pair $(132,213)$. Therefore, the enumeration of cyclic permutations avoiding pairs of
patterns of length three is {\em almost} complete.

\section{Further Directions}
The enumeration of cyclic permutations avoiding a single pattern of length three has proved more difficult than that of pairs of patterns of length three, and no results are yet known.

\begin{table}[h]
\centering
\caption{Number of cyclic permutations avoiding a single pattern of length 3}
\label{one-pattern}
\begin{tabular}{lllllll}
Avoiding: & 123   & 132   & 213   & 231  & 312  & 321   \\
$n$=3     & 2     & 2     & 2     & 1    & 1    & 2     \\
4         & 4     & 4     & 4     & 2    & 2    & 4     \\
5         & 10    & 10    & 10    & 5    & 5    & 10    \\
6         & 24    & 24    & 24    & 12   & 12   & 24    \\
7         & 68    & 68    & 68    & 30   & 30   & 66    \\
8         & 188   & 182   & 182   & 86   & 86   & 178   \\
9         & 586   & 544   & 544   & 253  & 253  & 512   \\
10        & 1722  & 1574  & 1574  & 748  & 748  & 1486  \\
11        & 5492  & 4888  & 4888  & 2274 & 2274 & 4446  \\
12        & 16924 & 14864 & 14864 & 7152 & 7152 & 13468
\end{tabular}
\end{table}

Numerical evidence shown in Table \ref{one-pattern} enabled us to formulate the following conjectures.
\begin{conjecture} For all positive integers $n$, the chain of inequalities
\[C_n(123) \geq C_n(132) = C_n(213) \geq C_n(321) \geq C_n(231) = C_n(312)\]
holds.
\end{conjecture}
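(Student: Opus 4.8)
The plan is to treat the two equalities and the three inequalities separately. The equalities are immediate. The map sending a permutation to its reverse--complement $p^{rc}$ is, on one-line notation, conjugation by the decreasing permutation $w$ of length $n$ used in the proof of Proposition~\ref{conjugates}: one checks $p^{rc}_i = n+1-p_{n+1-i} = (wpw)(i)$, and since $w=w^{-1}$ this is a conjugation, hence preserves cycle type; in particular it sends cyclic permutations to cyclic permutations. It also sends the permutations of length $n$ avoiding a pattern $q$ bijectively onto those avoiding $q^{rc}$. As $132^{rc}=213$ and $231^{rc}=312$, this yields $C_n(132)=C_n(213)$ and $C_n(231)=C_n(312)$. (The latter also follows from $p\mapsto p^{-1}$, which preserves cycle type and sends $q$-avoiders to $q^{-1}$-avoiders, because $231^{-1}=312$.)

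What remains is the chain of three inequalities
\[ C_n(123)\ \ge\ C_n(132), \qquad C_n(213)\ \ge\ C_n(321), \qquad C_n(321)\ \ge\ C_n(231). \]
For each, I would try to construct an explicit injection from the cyclic permutations avoiding the smaller pattern into those avoiding the larger one. The natural starting point is the recursive ``root decomposition'' of each of these Catalan classes: a $132$-avoider splits around its largest entry as $(\text{high block})\,n\,(\text{low block})$; a $231$-avoider splits around $n$ as $(\text{low block})\,n\,(\text{high block})$; a $213$-avoider splits around its smallest entry as $(\text{high block})\,1\,(\text{low block})$; and a $123$-avoider (respectively a $321$-avoider) is a shuffle of two decreasing (respectively increasing) subsequences. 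Using these descriptions one parametrizes each class by combinatorial data and looks for a data-to-data map that both lands in the target avoidance class and keeps the functional digraph $i\mapsto p(i)$ a single $n$-cycle. The mechanism would be to track how a controlled local modification of the one-line notation --- transposing two values, reversing a block, or cyclically rotating a block as in the proof of Proposition~\ref{conjugates} --- affects the cycle structure, and to aim for modifications that are cycle-neutral while trading one forbidden pattern for the other.

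The main obstacle is precisely the clash between these two demands: pattern-avoidance classes and conjugacy classes sit very rigidly inside $S_n$, and no cycle-type-preserving bijection is known between, say, $\mathrm{Av}_n(123)$ and $\mathrm{Av}_n(132)$ --- the classical Catalan bijections (Simion--Schmidt, Knuth, West, and the rest) all scramble the cycle structure. I would therefore attack the three inequalities in increasing order of apparent difficulty. The inequality $C_n(321)\ge C_n(231)$ has by far the largest gap (the two sides differ by a factor close to $2$ for small $n$) and looks most promising: one might hope to embed the cyclic $231$-avoiders into the cyclic $321$-avoiders with room to spare, exploiting the extremely constrained stack-sortable, nested-block structure of $231$-avoiders. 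The two tight inequalities $C_n(123)\ge C_n(132)$ and $C_n(213)\ge C_n(321)$ are likely the hardest, since the two sides coincide for all small $n$ (through $n=7$ and $n=6$ respectively), so any injection would have to be essentially a bijection and hence very delicate. A reasonable fallback is a staged proof: the conjecture already holds by direct enumeration for small $n$ (Table~\ref{one-pattern}); one could next establish it for structured families of $n$ (for instance $n\equiv 0,2 \pmod{4}$, or $n$ prime, where the arithmetic behind the known pair-avoidance formulas is cleaner), or prove only the weaker statement that the corresponding growth rates satisfy the same chain, which requires controlling just the exponential order rather than the exact counts.
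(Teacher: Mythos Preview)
The statement you are trying to prove is labeled a \emph{Conjecture} in the paper, not a theorem: the paper offers no proof of the three inequalities, and explicitly leaves them open. The only part the paper does establish is exactly what you establish, and by the same arguments: the two equalities $C_n(132)=C_n(213)$ and $C_n(231)=C_n(312)$ follow from the reverse--complement and inverse maps, respectively, both of which preserve cycle type. Your derivation of the reverse--complement as conjugation by the decreasing permutation $w$ is a nice way to make the cycle-preservation explicit, and matches the paper's one-sentence justification.

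For the remaining chain $C_n(123)\ge C_n(132)\ge C_n(321)\ge C_n(231)$, your proposal is candid that it is only a plan, not a proof, and that the natural Catalan bijections destroy cycle structure. That is an accurate assessment, and it is precisely why the paper states this as a conjecture rather than a result. So there is no discrepancy with the paper here --- you and the authors are in the same position: the equalities are proved, the inequalities are not. Your outlined strategies (searching for cycle-neutral local moves, attacking the loosest inequality $C_n(321)\ge C_n(231)$ first, or retreating to growth-rate comparisons) are reasonable lines of attack, but none of them is carried out, and the paper does not carry them out either.
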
 

The four distinct sequences in Table \ref{one-pattern} are listed in OEIS as sequences A309504, A309505, A309506, and A309508 \cite{oeis}. 

Note that the equality  $C_n(132) = C_n(213)$ is obvious, since the reverse complement of a cyclic permutation is
a cyclic permutation, and the reverse complement of a $q$-avoiding permutation avoids the reverse complement of $q$.
Also note that the equality   $C_n(231) = C_n(312)$  is obvious, since the inverse of a cyclic permutation is cyclic, and the inverse of a
$q$-avoiding permutation avoids $q^{-1}$. 
\begin{conjecture} \label{twobounds}
For each pattern $q$ of length 3 and $n \geq 3$, the chain of inequalities 
\[2C_n(q) \leq C_{n+1}(q) \leq 4C_n(q) \] holds.
\end{conjecture}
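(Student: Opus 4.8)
The plan is to handle both inequalities through a single operation that visibly preserves the cyclic structure, and then to control pattern avoidance pattern-by-pattern. First I would invoke the two symmetries noted above: taking inverses gives $C_n(231)=C_n(312)$ and taking reverse complements gives $C_n(132)=C_n(213)$, and each symmetry sends a cyclic permutation to a cyclic permutation while preserving the ratio $C_{n+1}/C_n$. Hence it suffices to establish $2C_n(q)\le C_{n+1}(q)\le 4C_n(q)$ for the four representatives $q\in\{123,132,231,321\}$. Throughout, write $X_n$ for the set of cyclic $q$-avoiding permutations of length $n$, so $C_n(q)=|X_n|$.

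The operation I would use is \emph{cycle-insertion of the largest value}. Given $p\in X_n$, viewed as an $n$-cycle, and any edge $k\to p(k)$ of that cycle, define $p'$ on $[n+1]$ by $p'(k)=n+1$, $p'(n+1)=p(k)$, and $p'(j)=p(j)$ otherwise. Inserting one element into an $n$-cycle yields an $(n+1)$-cycle, so $p'$ is automatically cyclic, and the inverse operation---cycle-deleting the element $n+1$---recovers $p$. In one-line notation the effect is transparent: the entry at position $k$ is promoted to the maximum $n+1$, and its former value $p(k)$ is appended as the new last entry $p'_{n+1}$. Because cycle-deleting $n+1$ returns $p$ uniquely, the images $p'$ arising from distinct $p$ are automatically distinct; thus the problem reduces to counting, for each $p$, how many of the $n$ edges $k$ yield a $q$-avoiding $p'$.

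For the lower bound $2C_n(q)\le C_{n+1}(q)$ I would exhibit two canonical edges that always work. The only new occurrences of $q$ in $p'$ must use either the promoted maximum at position $k$ or the appended entry $p'_{n+1}=p(k)$, and one checks---using the rigid structure forced by avoiding a length-three pattern (a $123$-avoider is a union of two decreasing subsequences; a $231$-avoider splits at its maximum into a smaller-valued block followed by a larger-valued block, and dually for $132$ and $321$)---that inserting along the edge leaving the current maximum, together with one further pattern-specific edge, both preserve avoidance. These two choices give an injection $\{1,2\}\times X_n\hookrightarrow X_{n+1}$, extending the template already carried out for the special family discussed in this paper to the non-monotone representatives $132$ and $231$ as well.

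The upper bound $C_{n+1}(q)\le 4C_n(q)$ is where the real obstacle lies. The natural hope is that cycle-deletion of $n+1$ maps $X_{n+1}$ into $X_n$ with fibers of size at most $4$; but this deletion lowers a single entry from the maximum down to $p'_{n+1}$, and such a drop can \emph{create} a forbidden pattern, so the deleted permutation need not avoid $q$. The plan is therefore to prove, for each of the four patterns, a structural lemma pinning down where the maximum $n+1$ and the appended value $p'_{n+1}$ can sit in a cyclic $q$-avoider: in a length-three $q$-avoider the admissible positions of the maximum relative to its cyclic successor are confined to an $O(1)$ window, so at most four cyclic $(n+1)$-avoiders lie above any given length-$n$ one. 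I expect this counting to be the genuinely hard step. Unlike the lower bound, it must bound a fiber uniformly in $n$ with no closed form for $C_n(q)$ available, and it must rule out precisely the pattern-creating deletions described above---the mismatch between cycle operations (which preserve cyclicity) and one-line point operations (which preserve avoidance) that makes the single-pattern problem resist exact enumeration in the first place.
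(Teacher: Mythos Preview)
The statement you are addressing is a \emph{conjecture} in the paper; there is no proof of it to compare against. The paper establishes only a fragment, namely the lower bound $2C_n(q)\le C_{n+1}(q)$ for $q=321$ (and a family of longer involutions), in the theorem immediately following the conjecture. Your proposal should therefore be read as an attempted proof of an open problem, and on that reading it has substantive gaps.

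For the lower bound, your cycle-insertion with $k=n$ is exactly the paper's construction (insert $n+1$ in the next-to-last position), and the paper's proof shows it works for $q=321$. Your assertion that ``inserting along the edge leaving the current maximum\ldots\ preserves avoidance'' for the remaining representatives is false. Take $q=123$ and $p=2413$, which is a cyclic $123$-avoider of length $4$. Inserting on the edge $4\to p(4)=3$ gives $p'=24153$; the entries $2,4,5$ in positions $1,2,4$ form a $123$ pattern. (The alternative reading---inserting on the edge leaving the \emph{position} of the maximum---gives $p'=25134$, which contains $134$.) The paper's hypothesis that the maximum of $q$ sit at position at most $k-2$ is exactly what makes the appended entry harmless, and $123$, $132$, $231$ all violate it. So the ``one checks'' step does not go through, and you have not produced two injections for these patterns.

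For the upper bound you yourself identify the difficulty and do not prove anything: cycle-deletion of $n+1$ can create a forbidden pattern, so the map $X_{n+1}\to X_n$ you describe need not land in $X_n$, and no fiber bound follows. The claim that ``the admissible positions of the maximum relative to its cyclic successor are confined to an $O(1)$ window'' is not established and is precisely the content of the conjectured inequality. As it stands, the proposal is a plan rather than a proof, and the lower-bound portion already fails on a small explicit example.
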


In the following theorem, we prove the lower bound of Conjecture \ref{twobounds} for the pattern 321 (and an infinite 
collection of longer patterns). 

\begin{theorem} 
Let $q=q_1q_2\cdots q_k$ be any {\em involution} of length $k>2$ such that if $q_i=k$, then $i\leq k-2$. 
Then for all $n\geq 2$, the inequality
\[2C_n(q) \leq C_{n+1}(q) \] holds. 
\end{theorem}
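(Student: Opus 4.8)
The plan is to construct an injection from the set of $q$-avoiding cyclic permutations of length $n$, taken \emph{together with a binary choice}, into the set of $q$-avoiding cyclic permutations of length $n+1$; this will immediately give $2C_n(q) \leq C_{n+1}(q)$. The natural operation that enlarges a cyclic permutation by one while preserving cyclicity is to splice the new largest entry $n+1$ into the cycle: if $p$ maps $x$ to $p(x)$, we pick some $x$, redirect $x \mapsto n+1$ and $n+1 \mapsto p(x)$, leaving all other arrows unchanged. In one-line notation, if $p = p_1 p_2 \cdots p_n$ and $p_j = p(x)$ for the chosen $x$, this amounts to inserting the entry $n+1$ immediately before position $j$ (i.e.\ writing $\cdots p_{j-1}\, (n+1)\, p_j \cdots$). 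The resulting word is always a single $(n+1)$-cycle, so the only thing to control is $q$-avoidance, and the only thing to arrange is that two different starting data never collide.

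First I would fix two canonical insertion sites that keep $q$-avoidance. The hypothesis on $q$ — that $q$ is an involution with its maximum entry $q_i = k$ occurring at a position $i \leq k-2$ — is exactly what guarantees that appending $n+1$ to the \emph{end} of the word cannot create a copy of $q$: a copy of $q$ using the final entry $n+1$ would force the role of the letter $k$ in $q$ to be played by that last entry, but $k$ sits in position $\leq k-2$ of $q$, so at least two more letters of the pattern would have to lie to its right, and there are none. Hence $p_1 \cdots p_n (n+1)$ is $q$-avoiding whenever $p$ is. However, appending $n+1$ at the end corresponds to choosing $x = p^{-1}(p_n)$... one must check this still yields a cycle; in fact inserting $n+1$ at the very end of the one-line notation of a cyclic $p$ does keep it cyclic, since it replaces the arrow into $p_n$ by a length-two detour through $n+1$. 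For the \emph{second} site I would insert $n+1$ in the position immediately after $p_1$ (equivalently, redirect the arrow that $p$ uses to reach $p_2$ through $n+1$), or, if that happens to break $q$-avoidance, fall back to another structurally safe slot; the cleanest robust choice is likely to insert $n+1$ directly to the \emph{left} of the current entry $n$, i.e.\ between $p_{t-1}$ and $p_t$ where $p_t = n$, since a copy of $q$ through $n+1$ would then need $n+1$ to play the role of $k$ followed immediately by $n$ playing a smaller role, and one checks this is incompatible with $q$ being an involution whose maximum is not in one of the last two spots.

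Given the two maps $\Phi_{\mathrm{end}}$ and $\Phi_{\mathrm{left-of-}n}$, the remaining work is to verify (a) each sends $q$-avoiding cyclic permutations of length $n$ to $q$-avoiding cyclic permutations of length $n+1$, (b) each is individually injective — which is clear because from the output one recovers $p$ by deleting the entry $n+1$ and standardizing, \emph{and} in each case the deleted position is determined (it is the last position for $\Phi_{\mathrm{end}}$, and the position just before the entry $n$ for the other map), and (c) the images of the two maps are disjoint — for this one observes that in the image of $\Phi_{\mathrm{end}}$ the entry $n+1$ is in the last position, while in the image of the other map the entry $n+1$ is immediately followed by the entry $n$ and the entry $n$ is not last (because $n \geq 2$ means $p$ had length $\geq 2$ and, $p$ being cyclic, $p_n \neq n$ so after inserting, $n$ is strictly interior). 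Combining, the disjoint union of the two images has size $2C_n(q)$ and sits inside the set counted by $C_{n+1}(q)$.

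The main obstacle I anticipate is pinning down the \emph{second} insertion site so that both the $q$-avoidance check and the disjointness-from-$\Phi_{\mathrm{end}}$ check go through cleanly, uniformly in $n$ and in the (infinite) family of admissible $q$. The "left of $n$" choice is attractive but the $q$-avoidance argument there is more delicate than the trivial append-at-the-end argument, since a putative copy of $q$ using $n+1$ need not use $n$, so one has to argue that $n+1$ playing the role of $k$ still fails because the position of $k$ in the involution $q$ forces the two letters after it to be distinct and both less than $n$, and then track where those could land. If that turns out to be fussier than expected, the fallback is to insert $n+1$ at the penultimate position (between $p_{n-1}$ and $p_n$) and argue avoidance and disjointness from there; in all variants the cyclicity is automatic from the arrow-splicing description, and injectivity and disjointness reduce to "the position of $n+1$ in the output is forced," so the genuine content is entirely in the pattern-avoidance bookkeeping, which is where I would concentrate the effort.
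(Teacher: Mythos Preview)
Your proposal has a genuine gap: you have conflated two different operations. The arrow-splicing you describe --- redirect $x\mapsto n+1\mapsto p(x)$ --- does \emph{not} correspond, in one-line notation, to inserting the symbol $n+1$ between $p_{x-1}$ and $p_x$ and shifting the tail one step to the right. Arrow-splicing produces the word $p_1\cdots p_{x-1}\,(n+1)\,p_{x+1}\cdots p_n\,p_x$ (replace the value at position $x$ by $n+1$ and append the old $p_x$ at the end), whereas word-insertion produces $p_1\cdots p_{x-1}\,(n+1)\,p_x\,p_{x+1}\cdots p_n$. The first always yields an $(n+1)$-cycle; the second almost never does. In particular your map $\Phi_{\mathrm{end}}$, which literally forms $p_1\cdots p_n\,(n+1)$, has $n+1$ as a fixed point and so is never cyclic. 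Likewise, word-inserting $n+1$ just to the left of the entry $n$ has no reason to preserve cyclicity. The one position where the two operations happen to coincide is the penultimate slot ($x=n$), which is exactly your ``fallback'' --- but that gives you only \emph{one} injection, not two.

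This is also where you have not used the hypothesis that $q$ is an involution; that hypothesis is doing real work and is the missing idea. The paper uses only the single safe insertion at the next-to-last position to build a set $S$ (each element has $n+1$ in position $n$), and then obtains the second set $T$ by taking \emph{inverses} of the permutations in $S$: inverses of cyclic permutations are cyclic, and because $q=q^{-1}$ the inverse of a $q$-avoider is still a $q$-avoider. Elements of $T$ have $n$ in position $n+1$, so $S\cap T$ would force the $2$-cycle $(n\ n+1)$, which is impossible in a cycle of length $>2$. That is how the factor $2$ arises, not from a second insertion site.
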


Note that 321 is the only pattern of length three that satisfies the requirements of the theorem. There are four such patterns
of length four, namely 4321, 4231, 3412, and 1432. 
\begin{proof}
Let $p=p_1p_2\cdots p_n$ be any cyclic permutation of length $n$ that avoids $q$. Now insert the entry $n+1$ to the next-to-last position
of $p$. Then $p$ is still $q$-avoiding, since $n+1$ is too far back in $p$ to be part of any copies of $q$. Furthemore, 
the obtained permutation $p'$ is still cyclic, since $p_i=p'_i$ for all $i\leq n-1$, and $p$ maps $n$ to $x$, while $p'$
maps $n$ to $n+1$, and then $n+1$ to $x$. So, we get the cyclic diagram of $p'$ by simply inserting the entry $n+1$
between $n$ and $x$ in the cyclic diagram of $p$. 

Doing this for all $C_n(q)$ cyclic, $q$-avoiding permutations of length $n$ yields a set $S$ of  cyclic
$q$-avoiding permutations of length $n+1$, each of which contains the entry $n+1$ in the $n$th position.
 As $q$ is an involution, the inverse $r^{-1}$ of any $q$-avoiding permutation $r$ is also $q$-avoiding. So taking the inverse of each permutation in $S$ yields a set $T$  of $C_n(q)$ cyclic
$q$-avoiding permutations of length $n+1$, each of which contains the entry $n$ in the $(n+1)$st position.
Finally, $S$ and $T$ are disjoint sets, since a cyclic permutation that is longer than 2 cannot contain the 2-cycle $(n \ n+1)$. 
\end{proof}

Some additional numerical evidence raises the following question.
\begin{question} Let $q$ be any pattern of length $k\geq 3$. Is it true that
\[(k-1)C_n(q) \leq C_{n+1}(q) \] if $n\geq  k$? 
\end{question}

Note that for general (non-cyclic) permutations, the answer to the analogous question is a straightforward "yes".  Indeed, 
let the maximal entry $k$ of $q$ be in the $(i+1)$st position of $q$. Then there  are $i$ entries on the left of $k$, and
$ k-1-i$ entries on the right of $ k$ in $q$. Therefore, if $p$ is a $q$-avoiding permutation of length $n$, then  the new maximal entry 
$n+1$ can be inserted in $p$ in $k-1$ ways, so that it is one of the leftmost $i$ entries, or one of the rightmost $k-1-i$ entries. 
In all those cases,  $n+1$ will be either too far left or too far right to be in a $q$-pattern. (See also Exercise 4.33 in
\cite{combperm}.) 
\vskip 0.5 cm

\acknowledgements
\label{sec:ack}
We are grateful to our three anonymous referees whose careful work improved the presentation of our results.

\nocite{*}
\bibliographystyle{abbrvnat}
% use the following instead if you encounter problems 
%\bibliographystyle{alpha}
\bibliography{cyclic-final}
\label{sec:biblio}

\end{document}